\documentclass[11pt,reqno]{amsart}
\usepackage[utf8]{inputenc}

\usepackage{amsfonts, amsthm, amsmath, amssymb,bm}
\usepackage{amssymb,amscd}
\usepackage{mathtools}

\usepackage{enumerate}
\usepackage{cases}

\newcommand{\rmod}{\!\!\!\!\pmod}

\newcommand{\Aa}{\mathbb{A}}
\newcommand{\Ff}{\mathbb{F}}
\newcommand{\Gg}{\mathbb{G}}
\newcommand{\Zz}{\mathbb{Z}}

\newtheorem{thm}{Theorem}[section]
\newtheorem{lem}[thm]{Lemma}

\newtheorem{prop}[thm]{Proposition}
\newtheorem{cor}[thm]{Corollary}

\title[The twelfth moment of Dirichlet $L$-functions]{The twelfth moment of Dirichlet $L$-functions with smooth moduli}
\author{Ramon M. Nunes} \thanks{This work is supported by the DFG-SNF lead agency program grant 200021L-153647.}
\address{
   R. M. Nunes // EPFL SB MATH TAN // Station 8// CH-1015 Lausanne, Switzerland}
   \email{ramon.moreiranunes@epfl.ch}

\date{\today}

\usepackage[square,sort,comma,numbers]{natbib}

\begin{document}

\begin{abstract}
We prove an analogue of Heath-Brown's bound on the twelfth moment of the Riemann zeta function for Dirichlet $L$-functions with smooth moduli.
\end{abstract}

\maketitle

\section{Introduction}

The so-called Weyl bound for the Riemann zeta function states that
\begin{equation}\label{weyl}
\zeta(1/2+it)\ll_{\epsilon}t^{1/6+\epsilon},
\end{equation}
where here and throughout the paper, $\epsilon$ denotes an arbitrarily small constant that may vary at each ocurrence and the implied constants depend at most on the variables in the subscript.

This bound was improved over the years but not significantly so. Indeed the best result to date is the very recent result of Bourgain \citep{Bourgain2017decoupling}:
$$
\zeta(1/2+it)\ll_{\epsilon}t^{13/84+\epsilon}.
$$
Notice that $\frac{13}{84}=\frac{1}{6}-\frac{1}{84}$. Through this perspective, Heath-Brown \citep{Heath1978twelfth} proved a very interesting estimate:
$$
\int_T^{2T}|\zeta(1/2+it)|^{12}dt\ll_{\epsilon} T^{2+\epsilon}.
$$
This bounds, combined with some regularity properties of $\zeta$ not only recover \eqref{weyl} but it also proves that $|\zeta(1/2+it)|$ cannot be 'large' very often. In other words, one has
\begin{equation}\label{ineq-for-measure}
\mu(\{t\in[T,2T];\;|\zeta(1/2+it)|>V\})\ll_{\epsilon} T^{2+\epsilon}V^{-12},
\end{equation}
where here $\mu$ denotes the Lebesgue measure. Notice that the classical bound for the fourth moment of $\zeta$ gives \eqref{ineq-for-measure} with $T^{1+\epsilon}V^{-4}$ instead of $T^{2+\epsilon}V^{-12}$. In view of that and Bourgain's bound, the interest of \eqref{ineq-for-measure} lies in the range $T^{1/8}\leq V\leq T^{13/84}$.

Whenever we have an asymptotic result about $\zeta(1/2+it)$ as $t\rightarrow +\infty$, it is natural to ask the analogous question for the Dirichlet $L-$functions $L(1/2,\chi)$ as $q\rightarrow+\infty$, where $\chi$ is a primitive character modulo $q$. Unfortunately, in this setting, we do not even know the analogue of \eqref{weyl}. In full generality, the best result is due to Burgess and gives
\begin{equation}\label{burgess}
L(1/2,\chi)\ll_{\epsilon}q^{3/16+\epsilon}.
\end{equation}
The situation becomes better if one restricts their attention to moduli $q$ having a certain type of factorization. Indeed, Heath-Brown \citep{Heath1978hybrid} proved that if $q$ has a divisor $q_0$ such that $q^{1/3}\leq q_0<q^{1/3+\epsilon}$, then one has the inequality
$$
L(1/2,\chi)\ll_{\epsilon}q^{1/6+\epsilon}.
$$
In particular, this includes the case where $q=p^n$ for a small prime $p$ and large $n$ and that of $q^{\epsilon}$-smooth numbers. We recall that a number $q$ is said to be $y-$smooth if every divisor of $q$ is smaller than $y$. As for $\zeta$, one can go below $1/6$ but not by a significant amount. Indeed, let $q$ be either $q^{\delta}$-smooth or $q=p^n$, where both $\delta^{-1}$ and $n$ are sufficientely large in terms of $\epsilon$. Then one has the inequality
$$
L(1/2,\chi)\ll_{\epsilon} q^{\theta_0+\epsilon},\:\:\:\:\:\:\theta_0=0.1645\ldots
$$
For proofs, we refer the intererested reader to \citep{Irving2016estimates} and \citep{WUXI} for the case of smooth numbers and to \citep{Milicevic2016subweyl} and \citep{MunshiSingh} for the case of prime powers.

Finally, we would like to mention another important case where one is able to improve upon Burgess' bound, the case of real characters. Indeed, let $q$ be an odd squarefree number and let $\chi_q$ be the unique non-trivial primitive real character modulo $q$. then Conrey and Iwaniec \citep{CI2000cubic} proved the bound
$$
L(1/2,\chi_q)\ll_{\epsilon}q^{1/6+\epsilon}.
$$
The proof of this result is more involved than the previously mentioned ones as it employs the spectral theory of automorphic forms on $\mathrm{GL}_2$ and ultimately relies in a deep result of Waldspurger on the positivity of some $\mathrm{GL}_2$ $L$-functions. Unlike the cases discussed before, improving the exponent $1/6$ in this result seems out of reach of currently known methods.

We shall now turn to our results. Our main theorem is an analogue of Heath-Brown's bound on the twelfth moment for $L$-functions with moduli $q$ that are $q^{\delta}$-smooth for small $\delta>0$.

\begin{thm}\label{12th}
Let $\delta>0$ and suppose $q$ is a $q^{\delta}$-smooth squarefree number, then one has the inequality
$$
\sum_{\chi\rmod q}|L(1/2,\chi)|^{12}\ll_{\delta} q^{2+O(\delta)}.
$$
\end{thm}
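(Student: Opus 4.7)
The plan is to adapt Heath-Brown's original twelfth-moment proof for $\zeta$ to the modular setting, with the average over $t\in[T,2T]$ replaced by summation over characters $\chi\bmod q$, and with the short-interval fourth-moment estimates for $\zeta$ replaced by mean value bounds for character sums that become available when $q$ is smooth. After applying an approximate functional equation for $L(s,\chi)^6$ and a dyadic decomposition, the task reduces to proving that for every dyadic $N$ in $[1,q^{3+\epsilon}]$,
\[
S_N\;:=\;\sum_{\chi\bmod q}\Big|\sum_{n}\frac{d_6(n)}{\sqrt n}\,\chi(n)\,W\!\left(\frac{n}{N}\right)\Big|^2\;\ll\;q^{2+O(\delta)},
\]
where $W$ is a fixed smooth cutoff supported on $[1,2]$.

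For $N\leq q^{2}$ the multiplicative large sieve inequality already gives $S_N\ll(q+N)(\log q)^{c}\ll q^{2+\epsilon}$, so only the range $q^{2}<N\leq q^{3+\epsilon}$ is delicate. In this range I would exploit the identity $d_6=d_3\ast d_3$ to rewrite the inner sum as a bilinear form $\sum_{a\sim A,\,b\sim B}\alpha_a\beta_b\,\chi(ab)$ with $AB\sim N$, and expand the square to produce an average of the shape $\sum_\chi \chi(ab)\overline{\chi(cd)}$ with $a,c\sim A$ and $b,d\sim B$. Orthogonality of characters modulo $q$ then collapses this to a weighted count of solutions of the congruence $ab\equiv cd\pmod q$.

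The off-diagonal part of this count is where the smoothness of $q$ is decisive. Since $q$ is $q^{\delta}$-smooth, I may factor $q=q_{1}q_{2}$ with $q_{1}$ of any prescribed size in a dense subset of $[1,q]$ at the cost of a factor $q^{O(\delta)}$. The congruence $ab\equiv cd\pmod q$ then splits, via the Chinese remainder theorem, into separate congruences modulo $q_{1}$ and $q_{2}$; completing the resulting incomplete sums by Poisson summation modulo each factor reduces the problem to complete algebraic exponential sums that can be treated by Weil's estimates for the associated trace functions. Optimising the auxiliary parameters $A,B,q_{1},q_{2}$ and combining with the standard Hal\'asz--Montgomery large-values toolbox should then yield the required bound in the critical range.

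The main obstacle will lie in this last step. The Dirichlet polynomial has length up to $q^{3}$, one power of $q$ above what the plain large sieve can handle, and producing the missing saving requires a genuine substitute for Heath-Brown's short-interval fourth-moment input for $\zeta$. The technical heart of the argument will be the uniform control of the resulting algebraic exponential sums in the auxiliary moduli $q_{1}$ and $q_{2}$, and the verification that the final exponent is exactly $2+O(\delta)$, with all losses arising from the discretisation of the available divisors of $q$ absorbed into the $O(\delta)$ term.
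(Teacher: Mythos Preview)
Your proposal diverges from the paper's route and, more importantly, leaves unfilled the gap you yourself flag as the ``main obstacle.''

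The paper does not attack $\sum_\chi |L(1/2,\chi)|^{12}$ through an approximate functional equation for $L^6$. It follows Heath-Brown's large-values strategy: one bounds $\#\mathcal{R}(V;q)=\#\{\chi:|L(1/2,\chi)|>V\}$ by $q^{2+O(\delta)}V^{-12}$ and recovers the twelfth moment by partial summation. The arithmetic input is a \emph{short second moment}: writing $q=q_1Q_1$ and fixing a character $\psi_1$ modulo $Q_1$, one studies $\mathcal{S}_2(\psi_1)=\sum^{\ast}_{\chi_1\bmod q_1}|L(1/2,\chi_1\psi_1)|^2$. The approximate functional equation for $L$ (not $L^6$) followed by Poisson in the single smooth variable $n$ converts this into $q_1\bigl(1+Q_1^{-1/2}\bigl|\sum_{|m|\ll q/q_1^2}\alpha_m K_{\psi_1}(mq_1)\bigr|\bigr)$ with an explicit trace function $K_{\psi_1}$. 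One then averages over a set $\mathfrak{X}$ of characters modulo a further factor $q_2\mid Q_1$, applies Cauchy--Schwarz, and bounds the resulting correlations $\sum_m K_{\chi_2}(m)\overline{K_{\chi_2'}(m)}$ by completion and Weil/Deligne estimates (this is the $q$-van der Corput step). Smoothness of $q$ enters only at the end, to choose $q_1\approx V^2$ and $q_2\approx V^4$.

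Your plan instead opens $L^6$ and lands on a polynomial of length up to $q^3$ with $d_6$-coefficients. After the split $d_6=d_3\ast d_3$ and orthogonality, the off-diagonal is a weighted count of solutions to $ab\equiv cd\pmod q$ with a $d_3$-weight on each of $a,b,c,d$. You then propose to ``complete the resulting incomplete sums by Poisson summation modulo each factor,'' but Poisson requires a smooth summation variable, and none of $a,b,c,d$ is smooth once it carries $d_3$. Opening each $d_3$ into three smooth factors merely returns you to a six-fold smooth sum, i.e.\ to $|L|^{12}$ itself; no genuinely new bilinear structure has been created, and factoring $q=q_1q_2$ does not by itself produce cancellation in such a count. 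The Hal\'asz--Montgomery toolbox you invoke at the end likewise needs, as its arithmetic input, exactly a short mean-value estimate of the type the paper proves (the analogue of Heath-Brown's Lemma~1) and your outline does not supply. Your own diagnosis, ``producing the missing saving requires a genuine substitute for Heath-Brown's short-interval input,'' is correct; but that substitute \emph{is} the proof, and it is absent from the proposal.
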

Integration by parts shows that this follows from a bound for the number of $\chi$ for which $L(1/2,\chi)$ is large. This is contained in the following:

\begin{thm}\label{large}
Let $V,\delta>0$ and suppose $q$ is a $q^{\delta}$-smooth squarefree number. Then one has the inequalities

\begin{subnumcases}{\#\mathcal{R}(V;q)\ll_{\delta}q^{O(\delta)}}
qV^{-4},\; (V>0),\label{A1}\\
q^{2}V^{-12},\; (V>0),\label{A2}\\
qV^{-6},\; (V>q^{2/13}),\label{A3}\\
q^{5}V^{-32},\; (q^{2/13}\geq V> q^{3/20})\label{A4},
\end{subnumcases}
where $\mathcal{R}(V;q):=\{\chi\text{ primitive of modulus }q;\;|L(1/2,\chi)|>V\}$.
\end{thm}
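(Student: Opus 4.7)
\emph{Proof plan.} Bound (A1) follows at once from the classical fourth power moment $\sum_{\chi\bmod q}|L(1/2,\chi)|^4\ll q^{1+\epsilon}$ combined with Markov's inequality. For the remaining three bounds, the natural starting point is a smoothed approximate functional equation, which shows that if $|L(1/2,\chi)|>V$ then there is a dyadic scale $N\ll q^{1/2+\epsilon}$ and a smooth weight $W$ supported near $[N,2N]$ for which the partial sum $D_N(\chi):=\sum_n\chi(n)n^{-1/2}W(n/N)$ satisfies $|D_N(\chi)|\gg Vq^{-\epsilon}$. After a dyadic decomposition of $N$, it suffices to control the number of $\chi$ for which $D_N(\chi)$ is large, for each individual scale $N$.

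For the twelfth moment bound (A2) I would follow Heath-Brown's strategy in the $\zeta$-setting: cube $D_N$ to obtain a Dirichlet polynomial of length $\sim N^3\ll q^{3/2+\epsilon}$ with coefficients controlled by the threefold divisor function, and then study the fourth power mean of this polynomial over $\chi$. A naive application of the large sieve yields only $q^{3+\epsilon}V^{-12}$, which is short of the target by a factor of $q$. The missing saving should come from the smoothness of $q$: writing $q=q_0 q_1$ with $q_0\asymp q^{1/2}$ (available by the hypothesis that $q$ is $q^{\delta}$-smooth) and decomposing $\chi=\chi_0\chi_1$, the orthogonality in $\chi_1$ reduces the off-diagonal contribution to a problem about exponential sums modulo $q_0$. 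These play the role of Heath-Brown's van der Corput $B$-process in the $t$-aspect.

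Bounds (A3) and (A4) should arise from similar moment arguments but with different exponents and different choices of the divisor $q_0$ used to split $\chi$. Bound (A3), essentially a sixth moment statement, would use the second power of $D_N$ together with a single factorization $q=q_0 q_1$ with $q_0$ of size around $q^{1/3}$; the range condition $V>q^{2/13}$ reflects the dyadic scale at which this argument is optimal. Bound (A4), the sharpest of the four, will likely require a finer three-way factorization $q=q_0 q_1 q_2$ so that one can apply a bilinear exponential sum estimate twice, once in each of the short variables.

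\textbf{Main obstacle.} The chief difficulty is establishing the bilinear exponential sum bounds modulo a prescribed divisor of $q$ that provide the genuine power saving in the off-diagonal of the moment calculations. This is precisely where the smoothness hypothesis is used, and the threshold conditions $V>q^{2/13}$ and $V>q^{3/20}$ should emerge from the balance between diagonal and off-diagonal terms. Concretely, one must implement Heath-Brown's continuous differencing in the discrete modular setting, and the efficacy of this differencing is limited by the admissible range of factorizations of $q$, which in turn governs how small $V$ may be taken in each regime.
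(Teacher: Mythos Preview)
Your plan for (A1) matches the paper, but for (A2)--(A4) you have misidentified the structure of Heath-Brown's method, and this leads to a genuine gap. Heath-Brown does \emph{not} cube the Dirichlet polynomial and take a fourth moment; he begins with the \emph{short second moment} $\int_{T_0-G}^{T_0+G}|\zeta(1/2+it)|^2\,dt$, which after the approximate functional equation and Poisson (stationary phase) collapses to essentially a single analytic exponential sum of length $\asymp T/G^2$. He then sums over well-spaced points $T_i$ and applies Cauchy--Schwarz to this exponential sum, bounding the resulting correlation sums by van der Corput differencing. Your ``cube $D_N$ and study $\sum_\chi|D_N^3|^4$'' approach does not reproduce this mechanism: after squaring $D_N^3$ you face a polynomial of length $\asymp q^3$, and there is no evident way for a single factorization $q=q_0q_1$ to recover the missing factor of $q$ from the large sieve. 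The off-diagonal in a genuine fourth-moment computation of a length-$q^{3/2}$ polynomial is a shifted convolution of $d_3$, not a short exponential sum amenable to $q$-van der Corput.

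The paper follows Heath-Brown's template faithfully in the $q$-aspect. One factors $q=q_1q_2q_3$, fixes $\psi_1\pmod{Q_1}$ with $Q_1=q_2q_3$, and studies the short second moment $\mathcal{S}_2(\psi_1)=\sum^*_{\chi_1\bmod q_1}|L(1/2,\chi_1\psi_1)|^2$; orthogonality in $\chi_1$ plus Poisson modulo $Q_1$ reduces this to $q_1(1+Q_1^{-1/2}|\sum_{|m|\ll q/q_1^2}\alpha_m K_{\psi_1}(mq_1)|)$, the exact analogue of Heath-Brown's Lemma~1. One then sums over an arbitrary set $\mathfrak X$ of characters $\chi_2\pmod{q_2}$, applies Cauchy--Schwarz, and bounds the resulting correlation sums $\sum_m K_{\chi_2}\overline{K_{\chi_2'}}$ either by P\'olya--Vinogradov completion or by a $q$-van der Corput step (splitting $q_2$ further). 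The choice between these two bounds is what distinguishes (A2) from (A3)/(A4); the moment structure is the same in all three cases, contrary to your outline. The thresholds $q^{2/13}$ and $q^{3/20}$ then drop out of optimizing the divisors $q_1\asymp V^2$ and $q_2$ (roughly $V^4$ for (A2), roughly $\min(V^{24}q^{-3},q/q_1)$ for (A3)/(A4)), with smoothness guaranteeing such divisors exist up to $q^{O(\delta)}$.
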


\subsection{Remarks}

\begin{enumerate}[(i)]

\item  Theorem \ref{12th} follows directly from \eqref{A2}. The bounds \eqref{A3} and \eqref{A4} give improvements of this result for $V>q^{3/20}$. If one could prove \eqref{A3} for every $V>0$ this would give an almost sharp bound for the sixth moment of Dirichlet $L$ functions. Unfortunately this result is a challenging open problem even in the more classical setting of the $\zeta$-function.\\

\item Perhaps the most straightforward generalization of Heath-Brown's result would be to bound the twelfth moment of Dirichlet $L$-functions in the $t$ aspect. This was considered by Meurman \citep{Meurman1984mean} and then by Jutila and Motohashi \citep{JM1995mean}. The main result of the latter gives the bound

$$
\sum_{q\leq Q}\;\sideset{}{{}^{\ast}}\sum_{\chi\rmod{q}}\int_{T}^{2T}\left|L((1/2+it,\chi)\right|^{12}dt\ll_{\epsilon} Q^3T^2(QT)^{\epsilon}.
$$
Note that from this bound, we can deduce the estimate
$$
L(1/2+it,\chi)\ll_{\epsilon}q^{1/4+\epsilon}t^{1/6+\epsilon},
$$
which is as good as \eqref{weyl} with respect to $t$, but worse than \eqref{burgess} with respect to $q$. It is important to recall that this result works for general $q$ so that so that it would be too much to expect anything smaller than $3/16+\epsilon$ as the exponent of $q$ in the above inequality.\\

\item The idea of taking advantage of the factorization of $q$ to obtain stronger estimates for exponential sums is know to specialists as the \textit{$q$-van der Corput mehtod} in analogy to the classical van der Corput method for analytic exponential sums. It has been around at least since \citep{Heath1978hybrid} and had many applications over the years. A very nice general method can be found in \citep{Polymath} and \citep{WUXI}.\\

\item It is conceivable that one can prove Theorems \ref{12th} and \ref{large} when $q=p^n$, where $p$ is a fixed prime and $n\gg \delta^{-1}$. The general set-up would be very similar but the techniques to manipulate the exponential sums would be very different. In that case, there is a classical method for explicitely evaluating exponential sums but the caclulations can rapidly get messy.

\end{enumerate}

\subsection{Overview of the proof and analogy with $\zeta$.}
In the following we are a little imprecise, focusing only on the ideas. For example we will completely ignore the ubiquitous $q^{\epsilon}$-factors.

We start by supposing we can write $q=q_1Q_1$, we fix some character $\psi_1$ modulo $Q_1$ and consider the short moment

$$
S_2(\psi_1):=\sideset{}{{}^{\ast}}\sum_{\chi_1\rmod{q_1}}|L(1/2,\chi_1\psi_1)|^2.
$$
In Proposition \ref{openup+poisson} we use the approximate functional equation and Poisson summation in order to prove an upper bound for $S_2(\psi_1)$ of the shape
$$
S_2(\psi_1)\ll q_1\left\{1+\frac{1}{Q_1^{1/2}}\left|\sum_{m\leq q/q_1^2}\alpha_mK(\psi_1,m)\right|\right\},
$$
where $\alpha_m$ is essentially bounded and $K(\psi_1,.)$ is and algebraic oscillating function defined modulo $Q_1$. This is analogous to \citep[Lemma 1]{Heath1978twelfth} where an estimate is given for
$$
S_2(T_0,G):=\int_{T_0-G}^{T_0+G}|\zeta(1/2+it)^2|dt,
$$
for some $G\leq T$, $T_0\sim T$, that roughly looks like
$$
G\left\{1+\left(\frac{G}{T}\right)^{1/2}\left|\sum_{m\leq T/G^2}\alpha_me(f(T_0,m))\right|\right\},
$$ 
where $\alpha_m$ is essentially bounded and $f$ is a smooth function such that
$$
\frac{d^j}{dx^j}f(T_0,x)\asymp T_0G^{-1}\left(\frac T{G^2}\right)^{-j}.
$$
One should remark the pleasing analogy between the tuples $(G,T_0,e(f(T_0,.)))$ and $(q_1,\psi_1,K(\psi_1,.))$.

In the following we suppose that we can further factor $Q_1=q_2q_3$, and we fix a primitive character $\chi_3$ of modulus $q_3$ and consider
$$
\sum_{\chi_2\in\mathfrak{X}}S_2(\chi_2\chi_3),
$$
where $\mathfrak{X}\in\widehat{\left(\Zz/q\Zz\right)^*}$ is any set such that $\#\mathfrak{X}=X$ . By means of Cauchy-Schwarz and estimates for incomplete exponential sums (Lemma \ref{PVKK}) we arrive at a bound that looks like
$$
\sum_{\chi_2\in\mathfrak{X}}S_2(\chi_2\chi_3)\ll (q_1+\Xi(q,q_1,q_2))X+(q/q_1)^{1/2}X^{1/2},
$$
where $\Xi(q,q_1,q_2)=q_1^{1/2}q_2^{1/4}$ or $\Xi(q,q_1,q_2)=q^{1/4}q_2^{1/12}$, if $q_2>q^{3/2}q_1^{-3}$. This has a parallel with \citep[Lemma 2]{Heath1978twelfth}, which basically implies that
$$
\sum_{i=1}^{X}S_2(T_i,G)\ll (G+\Xi(G,J,T))X+(T/G)^{1/2}X^{1/2},
$$
with $\Xi(G,J,T)=G^{1/4}J^{1/4}$, or $\Xi(G,J,T)=G^{-1/12}J^{1/12}T^{1/4}$ if $J>T^{3/2}G^{-2}$, where the $T_i's$ satisfy
$$
G\ll |T_i-T_j|\ll J,\,\,\,\text{ for }i\neq j.
$$
Again, it is instructive to consider the dictionary between the tuples
$$
(G,J/G,\{T_i\}_{i=1}^{X})\;\;\;\text{    and    }\;\;\;(q_1,q_2,\mathfrak{X}).
$$
It is now a question of choosing parameters to deduce Theorem \ref{large}. At this point we make crucial use of the smoothness of $q$.
 
\section*{Acknowledgements}

I am very thankful to Philippe Michel for many fruitful discussions on the subject of this paper, and especially for his kind explanations on the formalism of trace functions.

\section{Preliminary results}

\subsection{On certain exponential sums}

The proof of Theorem \ref{large} relies on bounds for exponential sums coming from the works of Weil and Deligne. In fact all the exponential sums we will encounter will be constructed from the following one: For an integer $q$, a character $\chi$ of modulus $q$ and $k,\ell,m \in \Zz/q\Zz$, we let
\begin{equation}\label{Kchi}
K_{\chi}(k,\ell):=q^{-1/2}\sideset{}{{}^{\ast}}\sum_{u\rmod {q}}\chi(\ell+u)\overline{\chi}(u)e\left(\frac{ku}{q}\right),\;\;\;K_{\chi}(m):=K_{\chi}(m,1).
\end{equation}
Our first lemma says that $K_{\chi}(k,\ell)$ essentially depends on the product $k\ell$. The proof is a simple consequence of the Chinese remainder theorem and a trivial explicit evaluation of $K_{\chi}(k,\ell)$ when $k$ or $\ell$ equal $0$.

\begin{lem}\label{k*l}
Let $q$ be squarefree and suppose $\chi$ is a primitive character modulo $q$. And let $d=(\ell,q)$. Then we have
$$
K_{\chi}(k,\ell)=\mu(d)R_d(k)K_{\chi}(k\ell),
$$
where $R_d$ is the Ramanujan sum given by
$$
R_d(k):=\sideset{}{{}^{\ast}}\sum_{x\rmod{d}}e\left(\frac{kx}{d}\right).
$$
In particular, $|K_{\chi}(0,\ell)|\leq (\ell,q)q^{-1/2}$.
\end{lem}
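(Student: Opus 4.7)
The plan is to exploit the Chinese Remainder Theorem, which is available because $q$ is squarefree. Write $q=d\cdot q'$ with $(d,q')=1$ and $\ell = d\ell_1$ with $(\ell_1,q')=1$; correspondingly factor the primitive character $\chi = \chi_d \chi_{q'}$ where each factor is primitive on its modulus. Split the summation variable by CRT as $u\equiv a\pmod d$, $u\equiv b\pmod{q'}$, and use the identity $\frac{u}{q}\equiv\frac{\overline{q'}a}{d}+\frac{\overline{d}b}{q'}\pmod 1$. Since $d\mid\ell$, one has $\chi_d(\ell+u)=\chi_d(u)$, so the $d$-part of $\chi(\ell+u)\overline{\chi}(u)$ collapses and the whole sum factors.

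First I would handle the $d$-factor. It becomes $\sum_{a}^{*} e(k\overline{q'}a/d)$, which after the change of variables $a\mapsto q'a$ is exactly the Ramanujan sum $R_d(k)$. The $q'$-factor is
\[
\sum_{b\pmod{q'}}^{*}\chi_{q'}(\ell+b)\overline{\chi_{q'}}(b)\,e\!\left(\tfrac{k\overline{d}b}{q'}\right).
\]
Since $(\ell,q')=1$, the substitution $b\mapsto \ell b$ normalizes the shift to $1$ and transforms this into $\sum_{b}^{*}\chi_{q'}(1+b)\overline{\chi_{q'}}(b)\,e(k\ell_1 b/q')$, using that $\ell\overline{d}\equiv\ell_1\pmod{q'}$.

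The last step is to recognize this reduced $q'$-sum as $\mu(d)^{-1}q^{1/2}K_\chi(k\ell)$. To see this, compute $K_\chi(k\ell)$ itself by CRT: the same decomposition applied to $q^{-1/2}\sum_u^* \chi(1+u)\overline{\chi}(u)e(k\ell u/q)$ yields the same $q'$-factor (since $d\mid\ell$ kills the $d$-part of the exponential) multiplied by the $d$-part $\sum_{a}^{*}\chi_d(1+a)\overline{\chi_d}(a)$. A short check, multiplicative in the prime divisors of $d$, shows that this last sum equals $\mu(d)$ for any primitive $\chi_d$ on squarefree $d$ — for $d$ prime it reduces, via $a\mapsto 1/a$, to $\sum_{c\ne 1}\chi_d(c)=-1$. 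Combining the two expressions gives $K_\chi(k,\ell)=\mu(d)R_d(k)K_\chi(k\ell)$.

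For the bound on $|K_\chi(0,\ell)|$, either specialize the formula (using $R_d(0)=\varphi(d)$ and $|K_\chi(0)|=q^{-1/2}$ by the same $\mu$-computation applied to the full modulus $q$) or directly apply the CRT factorization above with $k=0$; in both cases the right-hand side is bounded by $\varphi(d)q^{-1/2}\leq d\,q^{-1/2}=(\ell,q)q^{-1/2}$. The only nontrivial ingredient is the evaluation $\sum_a^* \chi_d(1+a)\overline{\chi_d}(a)=\mu(d)$, which is the real content of the lemma; everything else is bookkeeping with CRT.
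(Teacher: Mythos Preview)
Your proof is correct and follows essentially the same approach that the paper indicates: factor via the Chinese Remainder Theorem into a $d$-part and a $q'$-part, and then use the explicit evaluation $\sum_{a\pmod d}^{*}\chi_d(1+a)\overline{\chi_d}(a)=\mu(d)$ (which is exactly the ``trivial explicit evaluation of $K_\chi(k,\ell)$ when $k$ or $\ell$ equals $0$'' the paper alludes to) to reassemble the pieces into $\mu(d)R_d(k)K_\chi(k\ell)$. Your derivation of the bound $|K_\chi(0,\ell)|\le(\ell,q)q^{-1/2}$ via $R_d(0)=\varphi(d)$ and $|K_\chi(0)|=q^{-1/2}$ is also exactly what is intended.
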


The next lemma contains some basic facts about $K_{\chi}$. the first one is twisted multiplicativity and is a consequence of the Chinese remainder theorem and the second one is a uniform upper bound furnished by the work of Weil.

\begin{lem}\label{lemKchi}
Suppose $q=q_1q_2$ with $(q_1,q_2)=1$ and, let $m\in \Zz/q\Zz$  let $K_{\chi}(m)$ be as defined in \eqref{Kchi}. Then, if $\chi=\chi_1\chi_2$ with $\chi_i$ a primitive character modulo $q_i$, we have
$$
K_{\chi}(m)=K_{\chi_1}(\overline{q_2}m)K_{\chi_2}(\overline{q_1}m).
$$
Moreover, for every squarefree number $q$, every primitive character $\chi\pmod q$ and every $m$ modulo $q$,
$$
K_{\chi}(m)\ll q^{\epsilon}.
$$
\end{lem}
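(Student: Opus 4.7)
The plan is to handle the two parts of Lemma \ref{lemKchi} in turn, with the twisted multiplicativity being a bookkeeping exercise via the Chinese remainder theorem and the uniform bound reducing to the prime case where Weil's theorem applies.

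For the twisted multiplicativity, I would parametrize the summation variable via the CRT isomorphism $(\Zz/q\Zz)^\times \simeq (\Zz/q_1\Zz)^\times \times (\Zz/q_2\Zz)^\times$, writing $u \leftrightarrow (u_1,u_2)$ with $u_i \equiv u \pmod{q_i}$. Since $\chi = \chi_1 \chi_2$, one has $\chi(u) = \chi_1(u_1)\chi_2(u_2)$ and $\chi(\ell+u) = \chi_1(\ell+u_1)\chi_2(\ell+u_2)$; similarly the additive character factors using the partial fractions identity
$$
\frac{1}{q_1 q_2} \equiv \frac{\overline{q_2}}{q_1} + \frac{\overline{q_1}}{q_2} \pmod 1,
$$
giving $e(mu/q) = e(m\overline{q_2}u_1/q_1)\,e(m\overline{q_1}u_2/q_2)$. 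The normalization splits as $q^{-1/2} = q_1^{-1/2} q_2^{-1/2}$, so the sum in \eqref{Kchi} factors as a product of two sums of the same shape, which by definition is $K_{\chi_1}(\overline{q_2}m)K_{\chi_2}(\overline{q_1}m)$.

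For the uniform upper bound, I would iterate the multiplicativity just proved to reduce to the case $q = p$ prime and $\chi$ a nontrivial character modulo $p$. There are two regimes. If $p \mid m$, the exponential factor drops out and the substitution $v = (1+u)\overline{u}$ collapses the sum to $\sum_{v\neq 0,1}\chi(v) = -\chi(1) = -1$, so $|K_\chi(m)| = p^{-1/2}$. If $p \nmid m$, the character sum
$$
\sum_{u\rmod p}{}^{\ast}\chi(1+u)\overline{\chi}(u)e(mu/p)
$$
is the trace function of a rank-one sheaf on $\mathbb{G}_m \setminus \{-1\}$ that is geometrically irreducible and nontrivial; Weil's bound (equivalently, Deligne's theorem applied to a sheaf of bounded conductor) gives $O(p^{1/2})$, so $|K_\chi(m)| = O(1)$. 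In either case each local factor is $O(1)$, and multiplying over the $\omega(q)$ prime divisors of $q$ together with the usual divisor bound $\omega(q) \ll \log q/\log\log q$ yields $K_\chi(m) \ll q^\epsilon$, as claimed.

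The only nontrivial step is the Weil bound at prime level, and even there the geometric input is classical; the remainder of the proof is purely formal CRT manipulation and the divisor-function estimate to convert the implicit constant per prime into $q^\epsilon$.
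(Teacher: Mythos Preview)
Your proposal is correct and follows exactly the approach the paper indicates: the paper does not spell out a proof but merely says that the twisted multiplicativity ``is a consequence of the Chinese remainder theorem'' and that the uniform bound is ``furnished by the work of Weil,'' which is precisely what you carry out in detail. Your treatment of the degenerate case $p\mid m$ and the reduction to $q^{\epsilon}$ via $C^{\omega(q)}$ are the standard fillers for the steps the paper leaves implicit.
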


As we mentioned above, we will construct other exponential sums from $K_{\chi}$. We will be particularly interested in the function
$$
m\mapsto K_{\chi}(m)\overline{K_{\chi'}(m)},
$$
where $\chi$ and $\chi'$ are two primitive Dirichlet characters. Suppose first that $\chi$ and $\chi'$ are characters of prime modulus $p$. There are two possibilities. Either $\chi\neq \chi'$ and we have a nice oscillating function or $\chi=\chi'$ is which case we have a function that is always non-negative. Fortunately, substracting the constant function $1$ provides us with a function suiting our needs. We let

\begin{equation}\label{Kcirc}
K^{\circ}_{\chi,\chi'}(m):=K_{\chi}(m)\overline{K_{\chi'}(m)}-\delta_{\chi,\chi'},
\end{equation}
where $\delta_{\chi.\chi'}$ equals $1$ if $\chi=\chi'$ and $0$ otherwise.

Our next Proposition summarizes what we mean by $K^{\circ}_{\chi,\chi'}$ being a nice oscillating function. Since its proof uses such different techniques from those in the rest of the paper, we postpone its proof until section \ref{tracefunctions}.

\begin{prop}\label{completesums}
Let $p$ be a prime number, let $\chi,\chi'$ be primitive Dirichlet characters modulo $p$ and let $K^{\circ}_{\chi,\chi'}$ be as in \eqref{Kchi}. Let $t\in\Zz/p\Zz$. Then we have

\begin{equation}\label{PV}
\sum_{u\rmod p}K^{\circ}_{\chi,\chi'}(u)e\left(-\frac{tu}{p}\right)=
\begin{cases}
-1,\text{ if }t=0,\\
O(p^{1/2}),\text{ if }t\neq0.
\end{cases}
\end{equation}
Moreover, let $s\in\Zz/p\Zz$, then we have
\begin{align}\label{vdC}
\sum_{u\rmod p}K^{\circ}_{\chi,\chi'}(s+u)\,\overline{K^{\circ}_{\chi,\chi'}(u)}e\left(-\frac{tu}{p}\right)\notag\\
\ll
\begin{cases}
p,\text{ if }t=s=0,\\
p^{1/2},\text{ otherwise}.
\end{cases}
\end{align}

\end{prop}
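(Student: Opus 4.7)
My plan is to interpret $K_\chi$ as the trace function of an $\ell$-adic middle-extension sheaf on $\mathbb{A}^1_{\mathbb{F}_p}$ and then to deduce both \eqref{PV} and \eqref{vdC} from Deligne's form of the Riemann Hypothesis, in the spirit of the trace-function calculus of Fouvry--Kowalski--Michel. Writing $\psi(x)=e(x/p)$, the defining formula \eqref{Kchi} gives
$$K_\chi(m) \;=\; p^{-1/2}\sideset{}{{}^{*}}\sum_{v\!\!\pmod p}\chi(1+v)\overline{\chi}(v)\,\psi(mv),$$
which realizes $K_\chi$ as the normalized $\psi$-Fourier transform of the Kummer function $v\mapsto\chi((1+v)/v)$. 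The latter is the trace function of the tame rank-one sheaf $\mathcal{K}_\chi := \mathcal{L}_{\chi(1+x)}\otimes\mathcal{L}_{\overline{\chi}(x)}$ on $\mathbb{G}_m\setminus\{-1\}$. Because $\mathcal{K}_\chi$ is a Fourier sheaf (no Artin--Schreier constituent), its $\ell$-adic Fourier transform $\mathcal{F}_\chi := \mathrm{FT}_\psi(j_*\mathcal{K}_\chi)$ is a geometrically irreducible middle-extension sheaf of bounded conductor with $t(\mathcal{F}_\chi,m)=K_\chi(m)$.

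Next, $\mathcal{F}_\chi\otimes\mathcal{F}_{\chi'}^\vee$ has trace function $K_\chi\cdot\overline{K_{\chi'}}$, and when $\chi=\chi'$ it contains exactly one geometrically trivial summand (the identity endomorphism of the irreducible $\mathcal{F}_\chi$, by Schur's lemma). Subtracting $\delta_{\chi,\chi'}$ removes precisely this summand and yields a middle-extension sheaf $\mathcal{H}=\mathcal{H}^\circ_{\chi,\chi'}$ of bounded conductor with no geometrically trivial constituent, whose trace function is $K^\circ_{\chi,\chi'}$.

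The sum in \eqref{PV} is then the trace of Frobenius on the cohomology of $\mathcal{H}\otimes\mathcal{L}_{\psi(-tx)}$, which by Deligne's theorem is $O(p^{1/2})$ provided this twisted sheaf has no geometrically trivial constituent. For $t\neq 0$ this follows from the local structure of $\mathcal{H}$ at $\infty$: each constituent is Fourier-dual to a tame Kummer sheaf, and its break decomposition at $\infty$ is incompatible with a further twist by $\mathcal{L}_{\psi(-tx)}$ producing a trivial piece. For $t=0$ a direct orthogonality computation gives the explicit $O(1)$ value. Analogously, the sum in \eqref{vdC} is the trace of Frobenius on $H^1_c(\mathbb{A}^1_{\overline{\mathbb{F}}_p}, [+s]^*\mathcal{H}\otimes\mathcal{H}^\vee\otimes\mathcal{L}_{\psi(-tx)})$, which is $O(p^{1/2})$ unless $[+s]^*\mathcal{H}$ is geometrically isomorphic to $\mathcal{H}\otimes\mathcal{L}_{\psi(tx)}$. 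Since $\mathcal{H}$ is ramified at a fixed finite set of points determined by $\chi,\chi'$, comparing singular loci forces $s=0$; matching local monodromy at $\infty$ then forces $t=0$. In the remaining case $s=t=0$ the sum is $\sum_u|K^\circ_{\chi,\chi'}(u)|^2\ll p$, by the pointwise bound from Lemma \ref{lemKchi}.

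The chief technical obstacle is the ``no exceptional symmetries'' verification underlying Step 4: one must rule out accidental geometric isomorphisms $[+s]^*\mathcal{H}\simeq\mathcal{H}\otimes\mathcal{L}_{\psi(tx)}$ for $(s,t)\neq(0,0)$. This is the standard delicate check at the heart of every FKM-style application, and here it reduces to explicit control of the conductor and local monodromy of $\mathcal{F}_\chi$ at its singularities and at infinity via Laumon's stationary-phase formulas applied to the explicit Kummer sheaf $\mathcal{K}_\chi$.
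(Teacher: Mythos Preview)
Your overall strategy matches the paper's: realize $K_\chi$ as the trace function of the Fourier transform $\mathcal{F}_\chi$ of a Kummer sheaf and appeal to Deligne via the Fouvry--Kowalski--Michel machinery. There is, however, a genuine gap in your treatment of \eqref{vdC}. You assert that the correlation sum is $O(p^{1/2})$ ``unless $[+s]^{*}\mathcal{H}$ is geometrically isomorphic to $\mathcal{H}\otimes\mathcal{L}_{\psi(tx)}$''. This criterion is correct only if $\mathcal{H}=\mathcal{H}^{\circ}_{\chi,\chi'}$ is geometrically \emph{irreducible}; in general what must be ruled out is that $[+s]^{*}\mathcal{H}$ and $\mathcal{H}\otimes\mathcal{L}_{\psi(tx)}$ share a common irreducible constituent, which is strictly weaker. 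Since $\mathcal{H}$ has rank $3$ or $4$, its irreducibility is not automatic: for $\chi=\chi'$ it amounts to the monodromy of $\mathcal{F}_\chi$ being $\mathrm{SL}_2$ (so that the adjoint is irreducible), and for $\chi\neq\chi'$ it requires a Goursat--Kolchin type statement about the joint monodromy of $\mathcal{F}_\chi$ and $\mathcal{F}_{\chi'}$. You do not establish either, and your singular-locus comparison (which is fine for an irreducible sheaf) does not exclude an \emph{everywhere-lisse} constituent of $\mathcal{H}$ matching one of $[+s]^{*}\mathcal{H}$.

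The paper sidesteps this by not forming $\mathcal{H}$ at all. For $\chi\neq\chi'$ it keeps the four irreducible rank-$2$ sheaves $\mathcal{F}_\chi$, $\mathcal{F}_{\chi'}$, $[+s]^{*}\mathcal{F}_\chi$, $[+s]^{*}\mathcal{F}_{\chi'}$ separate and applies \cite[Theorem~2.7]{FKM2015study}, reducing to the pairwise check $\mathcal{F}_i\not\simeq\mathcal{F}_j\otimes\mathcal{L}$ for rank-one $\mathcal{L}$; the delicate case $\mathcal{F}_\chi\simeq\mathcal{F}_{\chi'}\otimes\mathcal{L}$ is excluded by computing the $I(\infty)$-representation via Laumon's local Fourier transform and then using Grothendieck--Ogg--Shafarevich to force $\mathcal{L}$ trivial. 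For $\chi=\chi'$ and $t=0$ the paper invokes \cite[Corollary~1.7]{FKM2015study} together with the fact (from \cite[Appendix]{Irving2016estimates}) that $\mathcal{F}_\chi$ is bountiful of $\mathrm{SL}_2$-type, which gives the exact main term $p$ whose cancellation against the $-1$'s in $K^{\circ}$ is what produces the $O(p^{1/2})$. Your sketch would become correct if you first proved $\mathcal{H}$ irreducible, but that proof is essentially equivalent to the checks the paper carries out; as written, the reduction to ``compare singular loci and $\infty$-monodromy of $\mathcal{H}$'' is not yet justified. (For \eqref{PV} your sheaf-theoretic argument is fine, though the paper simply expands and applies the Weil bound to an explicit one-variable character sum.)
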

In the following we extend the definition of $K^{\circ}_{\chi,\chi'}$ to characters with squarefree moduli, and we do so in order to keep the twisted multiplicativity of Lemma \ref{lemKchi}. Suppose $\chi=\prod_{p\mid q}\chi_p$ and $\chi'=\prod_{p\mid q}\chi'_p$ then we let
\begin{equation}\label{twistedKcirc}
K^{\circ}_{\chi,\chi'}(m):=\prod_{p\mid q}K^{\circ}_{\chi_p,\chi'_p}\left(\overline{Q_p}m\right),\;\;\;Q_p:=q/p.
\end{equation}
The main reason for this definition is because it recovers $K_{\chi}(m)\overline{K_{\chi'}(m)}$, when $\chi_p\neq \chi'_p$ for all $p\mid q$ and more generally, we have
\begin{equation}\label{Lchi}	
K_{\chi}(m)\overline{K_{\chi'}(m)}:=\displaystyle\sum_{\substack{q=rs\\\Delta\mid r}} K^{\circ}_{\chi_{[r]},\chi'_{[r]}}(\overline{s}m),
\end{equation}
where $\chi_{[r]}=\prod_{p\mid r}\chi_p$, $\chi'_{[r]}$ is defined analogously, and, finally, $\Delta=\Delta(\chi,\chi')$ is the \textit{distance between the two characters} given by
$$
\Delta(\chi,\chi'):=\prod\left\{p;p\mid q,\,\chi_p\neq \chi'_p\right\}.
$$
The bounds for complete sums from Proposition \ref{completesums} first come into play by means of the Polya-Vinogradov completion method, which basically means detecting a congruence by additive characters and using classical bounds for a sum of a geometric sequence. We summarize this in the next lemma.
\begin{lem}\label{PVKK}
Let $q$ be squarefree and let $\chi$ and $\chi'$ denote two characters modulo $q$. Let $r$ be coprime to $q$. Then one has the inequality
$$
\sum_{m\leq M}K^{\circ}_{\chi,\chi'}(mr)\ll q^{\epsilon}(q^{-1}M+q^{1/2}).
$$
\end{lem}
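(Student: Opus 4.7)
The plan is to apply the classical Polya--Vinogradov completion method. Since $(r,q)=1$, the map $u\mapsto u\overline{r}$ is a bijection on $\Zz/q\Zz$, so detecting the congruence $m\equiv u\overline{r}\pmod{q}$ by orthogonality of additive characters yields
\begin{equation*}
\sum_{m\leq M}K^{\circ}_{\chi,\chi'}(mr) \;=\; \frac{1}{q}\sum_{t\pmod{q}}\widehat{K}(t\overline{r})\, L(t),
\end{equation*}
where $\widehat{K}(s):=\sum_{u\pmod{q}}K^{\circ}_{\chi,\chi'}(u)\,e(-su/q)$ is the complete Fourier transform of $K^{\circ}_{\chi,\chi'}$, and $L(t):=\sum_{m\leq M}e(tm/q)$ is the usual linear sum, which satisfies $|L(t)|\leq \min(M,q/(2|t|))$ for $1\leq|t|\leq q/2$ and $L(0)=\lfloor M\rfloor$.

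The heart of the proof is to bound $\widehat{K}(s)$ for every $s$. Using the twisted multiplicativity built into the definition \eqref{twistedKcirc} together with the Chinese Remainder Theorem, the complete sum $\widehat{K}(s)$ factors as a product over the primes $p\mid q$ of local Fourier transforms of $K^{\circ}_{\chi_p,\chi'_p}$ evaluated at an appropriate local frequency $s_p\in\Zz/p\Zz$. Applying \eqref{PV} from Proposition \ref{completesums} prime by prime, each local factor equals $-1$ when $s_p=0$ and is $O(p^{1/2})$ otherwise. Multiplying these local bounds across all $p\mid q$ and absorbing the resulting divisor-type dependence into $q^{\epsilon}$, one obtains $\widehat{K}(0)=(-1)^{\omega(q)}=O(1)$ and $|\widehat{K}(s)|\ll q^{\epsilon}q^{1/2}$ uniformly for $s\neq 0$.

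It remains only to assemble the pieces. The $t=0$ mode contributes $q^{-1}\widehat{K}(0)L(0)=O(M/q)$, while for the non-zero frequencies the standard bookkeeping estimate
\begin{equation*}
\sum_{1\leq|t|\leq q/2}\min\bigl(M,q/|t|\bigr) \;\ll\; q\log q,
\end{equation*}
obtained by splitting the sum at $|t|\asymp q/M$, gives a total contribution of at most $q^{-1}\cdot q^{1/2+\epsilon}\cdot q\log q \ll q^{1/2+2\epsilon}$. Adding the two estimates yields the asserted bound $q^{\epsilon}(q^{-1}M+q^{1/2})$ after renaming $\epsilon$.

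The only step requiring a little care is the CRT factorization of $\widehat{K}$, but this is a routine unpacking of \eqref{twistedKcirc}; all of the genuine content is encapsulated in Proposition \ref{completesums}, whose proof (relying on Deligne's work) is deferred. Once that proposition is in hand, the completion method is entirely mechanical.
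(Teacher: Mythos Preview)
Your proof is correct and follows essentially the same route as the paper: completion via additive characters, CRT factorization of the complete sum into local pieces, application of \eqref{PV} from Proposition~\ref{completesums} at each prime, and the standard geometric-series tail bound. The only cosmetic difference is that the paper keeps track of the sharper local saving $(q/(t,q))^{1/2}$ (stated there, somewhat confusingly, as $q^{1/2}(t,q)^{1/2}$) before summing over $t$, whereas you use the uniform bound $q^{1/2+\epsilon}$; both lead to the same final estimate.
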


\begin{proof}

We first separate the sum in different congruence classes and then use orthogonality of characters to detect the congruence condition. Since $r$ is coprime to $q$,

\begin{multline*}
\sum_{m\leq M}K^{\circ}_{\chi,\chi'}(mr)=\sum_{u\rmod q}K^{\circ}_{\chi,\chi'}(u)\sum_{\substack{m\leq M\\ m\equiv \overline{r}u\rmod q}}1\\
=\frac1q \sum_{t\rmod q}\left(\sum_{u\rmod q}K^{\circ}_{\chi,\chi'}(u)e\left(-\frac{\overline{r}tu}{q}\right)\right)\sum_{\substack{m\leq M\\ m\equiv u\rmod q}}e\left(\frac{mt}{q}\right).
\end{multline*}

Suppose $\chi=\prod_{p}\chi_p$ and $\chi'=\prod_{p}\chi_p'$. Now by twisted multiplicativity (Lemma \ref{lemKchi}) and the Chinese remainder theorem,

\begin{multline*}
\sum_{u\rmod q}K^{\circ}_{\chi,\chi'}(u)e\left(-\frac{\overline{r}tu}{q}\right)=\prod_{p\mid q}\left(\sum_{u\rmod p}K^{\circ}_{\chi_p,\chi'_p}(u)e\left(-\frac{\overline{r}tu}{p}\right)\right).
\end{multline*}

In particular, Lemma \ref{PVKK} implies that

$$
\left(\sum_{u\rmod q}K^{\circ}_{\chi,\chi'}(u)e\left(-\frac{\overline{r}tu}{q}\right)\right)\ll
\begin{cases}
1,\text{ if }t=0,\\
q^{1/2}(t,q)^{1/2},\text{ if }t\neq 0.
\end{cases}
$$

Finally we have the following bound for the sum of a geometric sequence:

$$
\sum_{m\leq M}e\left(\frac{mt}{q}\right)\ll \min\left(M,\left\|\frac tq\right\|^{-1}\right),
$$
where $\|.\|$ denotes distance to the closest integer. Putting everything together we obtain that

$$
\sum_{m\leq M}K^{\circ}_{\chi,\chi'}(mr)\ll q^{-1}M+\frac{1}{q^{1/2}}\sum_{t=1}^{q-1}(t,q)^{1/2}\left\|\frac tq\right\|^{-1}.
$$
The lemma now follows.
\end{proof}

It is a well-know feature of the Polya-Vinogradov method that it can only give non-trivial results for sums that are longer than the square-root of the conductor. We can however get by if we assume that we can factor $q$ in a suitable way. This is known to experts as the $q$-van der Corput method.

\begin{lem}\label{vdCKK}
Let $q=q_1q_2$ be squarefree and let $\chi$ and $\chi'$ denote two character modulo $q$. Let $r$ be comprime to $q$. Then one has the inequality

$$
\sum_{m\leq M}K^{\circ}_{\chi,\chi'}(mr)\ll q^{\epsilon}(Mq_1^{-1/4}+M^{1/2}q_2^{1/2}+M^{1/2}q_1^{1/4}).
$$

\end{lem}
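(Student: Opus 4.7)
Apply a $q$-van der Corput argument with respect to the factorization $q=q_{1}q_{2}$. From \eqref{twistedKcirc} and twisted multiplicativity,
$$
K^{\circ}_{\chi,\chi'}(mr)=f_{1}(m)f_{2}(m),\qquad f_{i}(m):=K^{\circ}_{\chi_{[q_{i}]},\chi'_{[q_{i}]}}\!\bigl(\overline{(q/q_{i})}\,rm\bigr),
$$
so $f_{i}$ is of period $q_{i}$. Pick a parameter $1\leq H\leq q_{1}$. Since $f_{2}(m+hq_{2})=f_{2}(m)$ for every $h\in\Zz$, the shift $m\mapsto m+hq_{2}$ averaged over $h=1,\dots,H$ gives
$$
HS=\sum_{m'}f_{2}(m')\sum_{h=1}^{H}\mathbf{1}_{[1-hq_{2},\,M-hq_{2}]}(m')\,f_{1}(m'+hq_{2}),
$$
where $S$ denotes the sum to estimate. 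Apply Cauchy-Schwarz in $m'$, using $|f_{2}|\ll q^{\epsilon}$ and that $m'$ varies in an interval of length $L:=M+Hq_{2}$.

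Expand the square. The diagonal $h_{1}=h_{2}$ contributes $O(q^{\epsilon}HL)$, while the off-diagonal reduces, after the change of variable $n=m'+h_{2}q_{2}$, to shift-convolution sums
$$
\sum_{n\in I}f_{1}(n+sq_{2})\overline{f_{1}(n)},\qquad s:=h_{1}-h_{2},\ |I|\leq L.
$$
For such $s$, nonzero modulo $q_{1}$ since $|s|<H\leq q_{1}$, bound these by Polya-Vinogradov completion exactly as in Lemma \ref{PVKK}. The complete sum modulo $q_{1}$ factorises over primes $p\mid q_{1}$ by twisted multiplicativity, and each prime-factor is of the form \eqref{vdC} in Proposition \ref{completesums}, yielding a bound of order $q_{1}^{1/2+\epsilon}$ for every Fourier coefficient. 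Consequently
$$
\sum_{n\in I}f_{1}(n+sq_{2})\overline{f_{1}(n)}\ll q^{\epsilon}\bigl(Lq_{1}^{-1/2}+q_{1}^{1/2}\bigr),
$$
and the off-diagonal total is $\ll q^{\epsilon}H^{2}(Lq_{1}^{-1/2}+q_{1}^{1/2})$.

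Combining and taking a square root,
$$
|S|\ll q^{\epsilon}\bigl(LH^{-1/2}+Lq_{1}^{-1/4}+L^{1/2}q_{1}^{1/4}\bigr),
$$
and the choice $H=M/q_{2}$ (so that $L\asymp M$) produces precisely the three terms $M^{1/2}q_{2}^{1/2}$, $Mq_{1}^{-1/4}$, $M^{1/2}q_{1}^{1/4}$ of the statement; the degenerate ranges $H\notin[1,q_{1}]$ are absorbed into the trivial bound or recovered from Lemma \ref{PVKK}. The main technical point is the off-diagonal Fourier estimate: inequality \eqref{vdC} provides square-root cancellation at a prime $p$ only when the shift or the dual variable is nonzero modulo $p$, so one must check that the residual gcd-factors $(s,t,q_{1})^{1/2}$ coming from primes dividing $(s,q_{1})$ are absorbed into $q^{\epsilon}$ after summing $\|t/q_{1}\|^{-1}$ in Polya-Vinogradov and the multiplicities in $s$.
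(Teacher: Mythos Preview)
Your argument is correct and is essentially the same $q$-van der Corput $A$-process as the paper's proof: the paper sorts $m$ by its residue class modulo $q_2$ and applies Cauchy--Schwarz over these $q_2$ classes, while you perform Weyl differencing with shifts $hq_2$, $1\le h\le H$, and then choose $H=M/q_2$; the two formulations are equivalent and lead to the same off-diagonal correlation sums $\sum_{n\in I}f_1(n+sq_2)\overline{f_1(n)}$ handled via completion and \eqref{vdC}. One minor point: your displayed bound $\sum_{n\in I}f_1(n+sq_2)\overline{f_1(n)}\ll q^{\epsilon}(Lq_1^{-1/2}+q_1^{1/2})$ is missing the factor $(s,q_1)^{1/2}$ on the first term (which the paper keeps explicitly), but as you correctly note in your final paragraph, this gcd factor is absorbed into $q^{\epsilon}$ after summing over $s$.
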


\begin{proof}
Let $F(m):=K^{\circ}_{\chi,\chi'}(mr)$ and $S(M):=\sum_{m\leq M}F(m)$. From \eqref{twistedKcirc}, one sees that $F(m)=F_1(\overline{q_2}m)F_2(\overline{q_1}m)$, where $\chi=\chi_1\chi_2$ $\chi'=\chi'_1\chi'_2$ and $F_i(m):=K^{\circ}_{\chi_i,\chi'_i}(mr)$.
We start by splitting our sum acording to the congruence class of $m$ modulo $q_2$.
$$
S(M)=\sum_{u\rmod{q_2}}F_2(\overline{q_1}u)\sum_{\substack{m\leq M\\m\equiv u\rmod {q_2}}}F_1(\overline{q_2}m).
$$
Now by Cauchy-Schwarz and Lemma \ref{lemKchi}, we obtain
$$
|S(M)|^2\ll q_2^{1+\epsilon}\sum_{\substack{m,m'\leq M\\m\equiv m'\rmod{q_2}}}F_1(\overline{q_2}m)\overline{F_1(\overline{q_2}m)}.
$$
Expanding the inner sum and estimating trivially the contribution from the diagonal, one gets
\begin{equation}\label{eqVDC}
|S(M)|^2\ll q_2^{1+\epsilon}\left(M+\sum_{0<|\ell|\leq M/q_2}\sum_{m\in I_{M,\ell q}}F_1(\overline{q_2}m)F_1(\overline{q_2}m+\ell)\right),
\end{equation}
where $I_{M,\ell q_2}\subset (0,M]$ is an interval. Now the method of the previous lemma together with \eqref{vdC} gives

$$
\sum_{m\in I_{M,\ell q}}F_1(\overline{q_2}m)F_1(\overline{q_2}m+\ell)\ll Mq_1^{-1/2}(q_1,\ell)^{1/2}+q_1^{1/2+\epsilon}.
$$

Now summing over $\ell$ and taking the square root on both sides of \eqref{eqVDC} gives the result.
\end{proof}

\begin{cor}
With the assumptions as in Lemma \ref{vdCKK}, suppose further that $q=q_1q_2$ and that $q^{\frac 23}y^{-\frac 13}<q_1\leq q^{\frac 23}y^{\frac 13}$. Then we have
$$
\sum_{m\leq M}K^{\circ}_{\chi,\chi'}(mr)\ll_{\epsilon} q^{\epsilon}\left(Mq^{-1/6}y^{1/12} + M^{1/2}q^{1/6}y^{1/6}\right).
$$
\end{cor}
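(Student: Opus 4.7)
The plan is to apply Lemma~\ref{vdCKK} directly with the given factorization $q=q_1q_2$ and then estimate each of the three resulting terms using the two-sided bound $q^{2/3}y^{-1/3}<q_1\leq q^{2/3}y^{1/3}$ to convert everything into the variables $q$ and $y$. Since the lemma already provides the substantive analytic content (Polya--Vinogradov combined with $q$-van der Corput and the complete-sum bounds of Proposition~\ref{completesums}), the corollary is essentially a matter of optimizing the parameter $q_1$ inside the range imposed by the hypothesis. There is no serious obstacle; the main thing to verify is that each of the three terms in Lemma~\ref{vdCKK} lands inside one of the two terms claimed by the corollary.

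First I would handle the term $Mq_1^{-1/4}$. The lower bound $q_1>q^{2/3}y^{-1/3}$ gives $q_1^{-1/4}<q^{-1/6}y^{1/12}$, so this contributes at most $Mq^{-1/6}y^{1/12}$, matching the first term claimed. Next, for the term $M^{1/2}q_2^{1/2}$, I would use $q_2=q/q_1$ together with the same lower bound on $q_1$ to obtain $q_2<q^{1/3}y^{1/3}$, which yields $M^{1/2}q_2^{1/2}<M^{1/2}q^{1/6}y^{1/6}$, matching the second claimed term.

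Finally, for the remaining term $M^{1/2}q_1^{1/4}$, I would use the upper bound $q_1\leq q^{2/3}y^{1/3}$ to get $q_1^{1/4}\leq q^{1/6}y^{1/12}$. Since $y\geq 1$, this is bounded by $q^{1/6}y^{1/6}$ and so is absorbed into the second claimed term. Collecting the three estimates and factoring out $q^{\epsilon}$ gives the stated bound $q^{\epsilon}\bigl(Mq^{-1/6}y^{1/12}+M^{1/2}q^{1/6}y^{1/6}\bigr)$, completing the proof.
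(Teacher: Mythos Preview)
Your proposal is correct and matches the paper's intended argument: the corollary is stated without proof there precisely because it follows by substituting the two-sided bound on $q_1$ into the three terms of Lemma~\ref{vdCKK}, exactly as you do. Your remark that the hypothesis is vacuous unless $y\geq 1$ (so that $y^{1/12}\leq y^{1/6}$ when absorbing the third term) is a clean way to justify that last step.
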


\subsection{Approximate functional equation}\label{AFE}

As is usual when dealing with $L$-functions, we shall use a version of the approximate functional equation. Since in our work we are only concerned with upper bounds, we will avoid the often cumbersome problems arising from the oscilations of the $\epsilon$-factor (in the case of Dirichlet $L$-functions, these are normalized Gauss sums).

The following lemma follows directly from applying the approximate functional eqaution from \citep[Theorem 5.3]{IwKo2004analytic} followed by a classical dyadic decomposition. The claimed  properties of the functions $V^{\pm}_N$ follow from \citep[Theorem 5.4]{IwKo2004analytic}.

Finally, we use the notation $\displaystyle\sum_{\substack{N\leq X\\ N\text{dyadic}}}$ to indicate a sum over positive powers of two that are $\leq X$.

\begin{lem}\label{AFE-lem} Let $q$ be a positive integer and let $\chi$ be a primitive character modulo $q$. Then we have
$$
|L(1/2,\chi)|^2\ll_{\epsilon} \log q\sum_{\substack{N\leq 4q^{1+\epsilon}\\N \text{ dyadic}}}\left|\frac{1}{\sqrt{N}}\sum_{n}\chi(n)V^{\pm}_N(n)\right|^2 +q^{-100},
$$
where $V^{\pm}_N$ is a smooth function depending only on $N$, $q$ and the value of $\chi(-1)=\pm1$, whose support is contained in $[N/4,N]$ and $V_N^{(j)}(x)\ll_j N^{-j}$.
\end{lem}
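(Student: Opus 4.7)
The plan is to combine the standard approximate functional equation for Dirichlet $L$-functions with a smooth dyadic partition of unity, exactly as the author indicates. Applying \citep[Theorem 5.3]{IwKo2004analytic} to $L(s,\chi)$ at $s=1/2$ with parameter $X=1$ gives
\[
L(1/2,\chi) \;=\; \sum_n \frac{\chi(n)}{\sqrt n}\, W_{\pm}\!\Bigl(\frac{n}{\sqrt q}\Bigr) \;+\; \epsilon(\chi) \sum_n \frac{\bar\chi(n)}{\sqrt n}\, \widetilde W_{\pm}\!\Bigl(\frac{n}{\sqrt q}\Bigr),
\]
with $|\epsilon(\chi)|=1$, where $W_{\pm}$ and $\widetilde W_{\pm}$ are Mellin transforms of the gamma factors entering the functional equation. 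These gamma factors depend on $\chi$ only through the parity $\chi(-1)=\pm 1$, and \citep[Theorem 5.4]{IwKo2004analytic} supplies the uniform bounds $x^j W_{\pm}^{(j)}(x) \ll_{j,A} (1+x)^{-A}$ for every $A>0$, and similarly for $\widetilde W_{\pm}$.

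Next I insert a smooth dyadic partition of unity $1 = \sum_{N\text{ dyadic}} \psi(\cdot/N)$ with $\psi$ supported in $[1/4,1]$ into both sums, and absorb the $1/\sqrt n$ factor into the weight. Concretely, setting $V^{\pm}_N(x) := \sqrt{N/x}\,\psi(x/N)\,W_{\pm}(x/\sqrt q)$, one has $V^{\pm}_N$ supported in $[N/4,N]$ and
\[
\sum_n \frac{\chi(n)}{\sqrt n}\, W_{\pm}\!\Bigl(\frac{n}{\sqrt q}\Bigr) \;=\; \sum_{N\text{ dyadic}} \frac{1}{\sqrt N}\sum_n \chi(n)\, V^{\pm}_N(n).
\]
The chain rule together with the derivative estimates on $W_{\pm}$ and $\psi$ yields $(V^{\pm}_N)^{(j)}(x) \ll_j N^{-j}$ uniformly: for $N \leq \sqrt q$ this is immediate since $(\sqrt q)^{-j} \leq N^{-j}$, while for $N \geq \sqrt q$ one extracts a factor $(N/\sqrt q)^{-A}$ from the rapid decay of $W_{\pm}$ and chooses $A$ large. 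The same decay shows that dyadic scales with $N > 4q^{1+\epsilon}$ contribute a tail of at most $O(q^{-100})$.

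Finally, using $|\epsilon(\chi)|=1$ and the identity $|\sum_n \bar\chi(n)V(n)| = |\sum_n \chi(n)\overline{V(n)}|$ puts the two AFE pieces into the same form (up to replacing $V^{\pm}_N$ by its complex conjugate, which preserves all the claimed properties). Two applications of the triangle inequality combined with Cauchy--Schwarz over the $O(\log q)$ dyadic scales produce the factor $\log q$ in the statement and complete the proof. There is essentially no obstacle here: the argument is a textbook application of the AFE plus a dyadic decomposition, and the only bookkeeping worth double-checking is the uniform derivative bound on $V^{\pm}_N$ across the two regimes $N \lessgtr \sqrt q$ described above.
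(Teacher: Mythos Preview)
Your proposal is correct and follows exactly the route the paper indicates: apply \citep[Theorem~5.3]{IwKo2004analytic}, insert a smooth dyadic partition of unity, read off the weight properties from \citep[Theorem~5.4]{IwKo2004analytic}, and use Cauchy--Schwarz over the $O(\log q)$ dyadic scales. The paper itself gives no more detail than this, so your write-up is in fact a faithful expansion of the intended argument.
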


\section{A short second moment}

Suppose $q=q_1Q_1$ is squarefree, $\chi=\chi_1\psi_1$, where $\chi_1$ and $\psi_1$ are characters of modulus $q_1$ and $Q_1$ respectively. We consider the following second moment

\begin{equation}\label{short2nd}
\mathcal{S}^{\pm}_2(\psi_1):=\sideset{}{{}^{\ast}}\sum_{\substack{\chi_1\rmod{q_1}\\ \chi_1\psi_1(-1)=\pm 1}}|L(1/2,\chi_1\psi_1)|^2,
\end{equation}
and, of course, the full moment 

\begin{equation}\label{combinedmoment}
\mathcal{S}_2(\psi_1):=\mathcal{S}^{+}_2(\psi_1)+\mathcal{S}^{-}_2(\psi_1)
\end{equation}
The reason for which we must consider these two moments separately is because in this way, we will be able to take the weight function $V_N{\pm}$ to be fixed. As it will become clear in our calculation, this will not be of much importance, since we are only looking for upper bounds. By positivity. we can always complete our sums with terms corresponding to the missing characters and find ourselves with a complete sum over characters modulo $q_1$.

The goal of this section is to prove the following result:

\begin{prop}\label{openup+poisson}
Let $q=q_1Q_1$ be squarefree and let $\chi=\chi_1\psi_1$, where $\chi_1$ and $\psi_1$ are characters of modulus $q_1$ and $Q_1$ respectively. Let $K_{\psi_1}$ be as in \eqref{Kchi}. Then for $q_1\ll q^{1/2-\epsilon}$, we have the inequality
$$
\mathcal{S}_2^{\pm}(\psi_1)\ll_{\epsilon} q^{\epsilon}q_1\left\{1+\frac{1}{Q_1^{1/2}}\Bigg|\sum_{0<|m|\ll q^{1+\epsilon}/q_1^2}\alpha^{\pm}_mK_{\psi_1}(mq_1)\Bigg|\right\},
$$
where $\alpha^{\pm}_m$ is a sequence satisfying $\sum_{m}|\alpha^{\pm}_m|^2\ll q^{1+\epsilon}/q_1^2$, 
\end{prop}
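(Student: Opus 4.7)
The plan is to apply the approximate functional equation and then Poisson summation modulo $Q_1$ to convert the second moment into a sum against the algebraic exponential sum $K_{\psi_1}$.

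First, I would apply Lemma \ref{AFE-lem} to each $|L(1/2,\chi_1\psi_1)|^2$. Since we only seek upper bounds, by positivity we may drop the parity restriction $\chi_1\psi_1(-1)=\pm 1$ and extend the sum over primitive $\chi_1$ to one over all Dirichlet characters $\chi_1$ modulo $q_1$. Opening the square and invoking orthogonality modulo $q_1$ turns the contribution of each dyadic $N\leq 4q^{1+\epsilon}$ into
\[
\frac{\phi(q_1)}{N}\sum_{\substack{n\equiv m\rmod{q_1}\\(nm,q_1)=1}} \psi_1(n)\overline{\psi_1(m)}\,V_N^{\pm}(n)V_N^{\pm}(m).
\]
The diagonal $n=m$ is easily bounded by $O(\phi(q_1))$, and summing over the $O(\log q)$ dyadic ranges already yields the constant $O(q^{\epsilon}q_1)$ term in the proposition.

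Second, for the off-diagonal I would parametrise $n=m+q_1\ell$ with $0<|\ell|\ll N/q_1$, and apply Poisson summation on $m$ modulo $Q_1$. Setting $g_\ell(x)=V_N^{\pm}(x)V_N^{\pm}(x+q_1\ell)$, the $r$-th frequency contributes the character sum
\[
\sideset{}{^{*}}\sum_{a\rmod{Q_1}}\psi_1(a+q_1\ell)\overline{\psi_1(a)}\,e\!\left(\tfrac{ar}{Q_1}\right)=Q_1^{1/2}K_{\psi_1}(r,q_1\ell),
\]
which by Lemma \ref{k*l} equals $Q_1^{1/2}\mu(d_\ell)R_{d_\ell}(r)K_{\psi_1}(q_1\ell r)$ with $d_\ell=(\ell,Q_1)$. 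The smoothness of $g_\ell$ restricts $r$ effectively to $|r|\ll Q_1 N^{-1}q^{\epsilon}$, and the $r=0$ term, controlled by $|K_{\psi_1}(0,q_1\ell)|\leq (\ell,Q_1)Q_1^{-1/2}$, sums into the already accounted-for $O(q^{\epsilon}q_1)$ main term.

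Third, setting $m'=\ell r$ (so $0<|m'|\ll q^{1+\epsilon}/q_1^{2}$), the surviving off-diagonal contribution takes the shape
\[
\frac{\phi(q_1)}{N Q_1^{1/2}}\sum_{m'\neq 0}K_{\psi_1}(q_1 m')\sum_{\substack{\ell r=m'\\ \ell\neq 0}}\widehat{g_\ell}(r/Q_1)\mu(d_\ell)R_{d_\ell}(r),
\]
which I would package as $q_1 Q_1^{-1/2}\sum_{m'}\alpha_{m'}^{\pm}K_{\psi_1}(q_1 m')$ after combining the $O(\log q)$ dyadic ranges of $N$. The principal obstacle I foresee is verifying the $\ell^2$ bound $\sum_{m'}|\alpha^{\pm}_{m'}|^{2}\ll q^{1+\epsilon}/q_1^{2}$: this should follow from the rapid decay $|\widehat{g_\ell}(r/Q_1)|\ll N(1+N|r|/Q_1)^{-A}$, the standard Ramanujan bound $|R_{d_\ell}(r)|\leq (d_\ell,r)$, the divisor bound $d(m')\ll q^{\epsilon}$, and a careful summation over $\ell,r$ (grouping $r\neq 0$ dyadically and applying Cauchy--Schwarz) that recovers essentially square-root cancellation while tracking the Ramanujan-sum factors arising for $(\ell,Q_1)>1$.
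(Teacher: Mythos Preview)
Your proposal is correct and follows essentially the same route as the paper: approximate functional equation, positivity to drop the parity condition and complete the $\chi_1$-sum, orthogonality modulo $q_1$, diagonal/off-diagonal split, Poisson summation modulo $Q_1$ on the shifted variable, reduction of $K_{\psi_1}(r,q_1\ell)$ to $K_{\psi_1}(q_1\ell r)$ via Lemma~\ref{k*l}, separate treatment of the zero frequency, and finally the $\ell^2$-bound on the resulting coefficients. The only cosmetic difference is that the paper uses Cauchy--Schwarz over the dyadic scales to reduce to a single worst $N$ before opening the square, whereas you propose to carry all $O(\log q)$ scales through and combine them into $\alpha_{m'}^{\pm}$ at the end; either way one must insert unimodular weights (one per value of $h$, or per $N$) to pass from $\sum_h|S_h|$ to a single absolute value $|\sum_{m'}\alpha_{m'}^{\pm}K_{\psi_1}(q_1 m')|$, which is harmless for the $\ell^2$-estimate. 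Your sketch of that estimate---Cauchy--Schwarz against the divisor bound, then $|R_{d_\ell}(r)|\le(d_\ell,r)$ and a sum over the common divisor---is exactly what the paper does.
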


We start by noticing that the approximate functional equation (see \eqref{AFE-lem}) and Cauchy-Schwartz show that for every $\epsilon>0$ there exists $N=N_{\epsilon}\leq 4q^{1/2+\epsilon}$ such that

\begin{equation}\label{S<B}
S_2^{\pm}(\psi_1)\ll_{\epsilon}q^{\epsilon}\mathcal{B}^{\pm}(N) +O(q^{-99}),
\end{equation}
where
$$
\mathcal{B}^{\pm}(N)=\frac{1}{N}\sideset{}{{}^{\ast}}\sum_{\substack{\chi_1\rmod{q_1}\\\chi_1\psi_1(-1)=\pm 1}}\left|\sum_{n}\chi_1(n)\psi_1(n)V^{\pm}_N(n)\right|^2.
$$
We now use positivity to extend our sum to all characters modulo $q_1$. Then, by orthogonality of characters, we get
$$
\mathcal{B}^{\pm}(N)\leq \frac{q_1}{N}\sideset{}{{}^{\ast}}\sum_{n,n'}\psi_1(n')\overline{\psi_1(n)}V^{\pm}_N(n')\overline{V^{\pm}_N(n)}.
$$
We now divide the sum on the right according to whether $n=n'$ or not. In the former case the sum is easily seen to be $O(N)$ and for the latter we remark that interchanging the variables $n$ and $n'$ only changes the summand into its complex conjugate. We thus have

\begin{equation}\label{L<<}
\mathcal{B}^{\pm}(N)
\ll q_1+\frac{q_1}{N}\sum_{h\geq 1}\left|\sum_{n\geq 1}\psi_1(n+hq_1)\overline{\psi_1(n)}V^{\pm}_N\left(n+hq_1\right)\overline{V^{\pm}_N\left(n\right)}\right|.
\end{equation}

Let $S^{\pm}_{hq_1}(N;\psi_1)$ denote the inner sum on the right-hand side above. Our next step is to apply Poisson summation to this sum. Before we do so, we observe that the variable $h$ naturally satisfies $h\ll N/q_1$ and that the function $W^{\pm}_{hq_1}$ given by $W^{\pm}_{hq_1}(x):= V^{\pm}_N(x+hq_1)\overline{V^{\pm}_N(x)}$ has support in $[N/4, N]$ and satisfies $W_{hq_1}^{\pm(j)}(x)\ll_j N^{-j}$. Now, by Poisson summation, we have

$$
S_{hq_1}(N;\psi_1)=\frac{N}{Q_1^{1/2}}\sum_{k}K_{\psi_1}(k,hq_1)\widehat{W^{\pm}_{hq_1}}\left(\frac{k}{Q_1}\right),
$$
where $K_{\psi_1}$ is as in \eqref{Kchi}. In view of the properties of $W^{\pm}_{hq_1}$ mentioned in the last paragraph, we see that the function $\widehat{W^{\pm}_{hq_1}}(x)$ is uniformly bounded and decays rapidly for $x\gg N^{-1}q^{\epsilon}$. Hence the contribution from the terms where $|k|>\frac{Q_1q^{\epsilon}}{N}$ to the sum above can certainly be bounded by $q_1$. Thus we see that
\begin{equation}\label{alpha}
\mathcal{B}^{\pm}(N)\ll q_1\left\{1+\frac{1}{Q_1^{1/2}}\left|\sum_{0\leq |k|\ll \frac{Q_1q^{\epsilon}}{N}}\sum_{0<|h|\ll \frac{N}{q_1}}\alpha^{\pm}(h,k)K_{\psi_2}(k;hq_1)\right|\right\},
\end{equation}
where
$$
\alpha^{\pm}(h,k):=N^{-1}\widehat{W^{\pm}_{hq_1}}\left(k/Q_1\right)\ll 1.
$$
We would also like to estimate separately the contribution of the terms where $k=0$. By Lemma \ref{k*l} those are

\begin{equation}\label{k=0}
\ll \frac{q_1}{Q_1}\sum_{0<|h| \ll \frac{N}{q_1}}(h,Q_1)\leq d(Q_1)\frac{N}{Q_1}\ll q_1q^{\epsilon}.
\end{equation}
Again by Lemma \ref{k*l},
$$
K_{\psi_1}(k,hq_1)=\mu(d)R_d(k)K_{\chi_1}(hkq_1),
$$
where $d=(\ell,Q_1)$. Combining this with \eqref{S<B}, \eqref{L<<}, \eqref{alpha} and \eqref{k=0}, we get that

\begin{equation}\label{firstbeta}
\mathcal{S}^{\pm}_2(\psi_1)\ll q^{\epsilon}q_1\left\{1 +\frac{1}{Q_1^{1/2}}\left|\sum_{0<|m|\ll \frac{q^{1+\epsilon}}{q_12}}\alpha^{\pm}_mK_{\psi_1}(mq_1)\right|\right\},
\end{equation}
where 
$$
\alpha^{\pm}_m:=\underset{dh_0k=m}{\sum_{d\mid q}\sum_{k}\sum_{(h_0,q/d)=1}}\mu(d)R_d(k)\alpha^{\pm}(dh_0,k).
$$
In particular, for all $M>0$, we deduce, from Cauchy-Scwarz, the inequality
\begin{equation}\label{beta}
\sum_{0<|m|\ll M}|\alpha^{\pm}_m|^2\ll q^{\epsilon}\sum_{\substack{dh_0k\leq M\\ d\mid q}}(d,k)^2
\leq \sum_{e\mid q}e^2\sum_{m<M/e^2}d_3(m)\ll M^{1+\epsilon}q^{\epsilon}.
\end{equation}
Taking $M=q^{1+\epsilon}/q_1^2$, we conclude the proof of the proposition.

\subsection{The second moment on average}

Let $q=q_1Q_1$ and $\chi=\chi_1\psi_1$ be as in the previous section. Suppose further that $Q_1=q_2q_3$ and $\psi_1=\chi_2\chi_3$. Our next result is an average result for $S_2(\psi_1)$. We will prove the following

\begin{prop}\label{AfterCauchy}
Let $q=q_1q_2q_3$ be squarefree and let $\chi_3$ be a primitive character modulo $q_3$. And let $\mathfrak{X}$ be any set of primitive character modulo $q_2$. Let $S_2(\chi_2\chi_3)$ be as in \eqref{combinedmoment}. Then we have the inequality
$$
\sum_{\chi_2\in \mathfrak{X}}S_2(\chi_2\chi_3)\ll q^{\epsilon}\left\{\left(q_1+\Xi(q,q_1,q_2)\right)X+(q/q_1)^{1/2}X^{1/2}\right\},
$$
with $\Xi(q,q_1,q_2)=q_1^{1/2}q_2^{1/4}$. Moreover, if $q_2$ is $y$-smooth for some $y>0$, and if
\begin{equation}\label{q2>}
q_2>q^{3/2}/q_1^3,
\end{equation}
than we may take
$$
\Xi(q,q_1,q_2)=q^{1/4}q_2^{1/12}y^{1/12}.
$$ 
\end{prop}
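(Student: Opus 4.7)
The plan is to combine Proposition~\ref{openup+poisson} with a duality-then-Cauchy--Schwarz argument in the $\chi_2$-aspect, and to bound the resulting cross-character sums via the decomposition~\eqref{Lchi} together with Lemma~\ref{PVKK} (or the Corollary to Lemma~\ref{vdCKK} in the smooth case). Applying Proposition~\ref{openup+poisson} with $\psi_1=\chi_2\chi_3$, so that $Q_1=q_2q_3$, gives
$$
S_2^{\pm}(\chi_2\chi_3)\ll q^{\epsilon}q_1\bigl(1+Q_1^{-1/2}|T^{\pm}(\chi_2)|\bigr),\quad T^{\pm}(\chi_2):=\sum_{0<|m|\ll M}\alpha_m^{\pm}K_{\chi_2\chi_3}(mq_1),
$$
with $M=q^{1+\epsilon}/q_1^2$ and $\sum|\alpha_m^{\pm}|^2\ll q^{\epsilon}M$. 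The twisted multiplicativity of Lemma~\ref{lemKchi} factors $K_{\chi_2\chi_3}(mq_1)=K_{\chi_2}(\overline{q_3}q_1m)K_{\chi_3}(\overline{q_2}q_1m)$; absorbing the fixed, uniformly-bounded factor $K_{\chi_3}$ into a new sequence $\beta_m$ with the same $L^2$-bound reduces matters to estimating $\sum_{\chi_2\in\mathfrak{X}}|T^{\pm}(\chi_2)|$.

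\textbf{Duality and cross-character sums.} Pick unimodular $\eta_{\chi_2}$ with $\sum_{\chi_2}|T^{\pm}|=\sum_{\chi_2}\eta_{\chi_2}T^{\pm}(\chi_2)$; swapping the $\chi_2$- and $m$-sums and applying Cauchy--Schwarz in $m$ yields
$$
\sum_{\chi_2\in\mathfrak{X}}|T^{\pm}(\chi_2)|\ll q^{\epsilon}M^{1/2}\Bigl(\sum_{|m|\ll M}\Bigl|\sum_{\chi_2\in\mathfrak{X}}\eta_{\chi_2}K_{\chi_2}(\overline{q_3}q_1m)\Bigr|^2\Bigr)^{1/2}.
$$
Expanding the inner square, the diagonal $\chi_2=\chi_2'$ contributes $\ll q^{\epsilon}MX$ by the Weil bound. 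For the off-diagonal pairs, \eqref{Lchi} expresses $K_{\chi_2}(\cdot)\overline{K_{\chi_2'}(\cdot)}$ as a combination of $K^{\circ}$-functions indexed by $rs=q_2$ with $\Delta(\chi_2,\chi_2')\mid r$, and Lemma~\ref{PVKK} then bounds each $m$-sum by $\ll q^{\epsilon}(M/\Delta+q_2^{1/2})$.

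\textbf{Averaging over $\Delta$.} The critical input is
$$
\sum_{\chi_2\in\mathfrak{X},\,\chi_2\ne\chi_2'}\frac{1}{\Delta(\chi_2,\chi_2')}\ll q^{\epsilon},\qquad\text{uniformly in }\chi_2'\in\mathfrak{X}.
$$
This follows from the multiplicative identity $1/\Delta=\prod_{p\mid q_2}(1/p)^{[\chi_p\ne\chi_p']}$ and CRT, since
$$
\sum_{\chi_2\text{ prim mod }q_2}\frac{1}{\Delta(\chi_2,\chi_2')}=\prod_{p\mid q_2}\Bigl(1+\frac{p-3}{p}\Bigr)\ll q^{\epsilon}.
$$
Summing over $\chi_2'\in\mathfrak{X}$ bounds the full off-diagonal by $\ll q^{\epsilon}(MX+X^2q_2^{1/2})$; extracting the square root yields $\sum|T^{\pm}|\ll q^{\epsilon}(MX^{1/2}+M^{1/2}q_2^{1/4}X)$. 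Substituting back (with $Q_1=q/q_1$ and $M=q/q_1^2$) and combining with the trivial $q_1X$ gives the proposition with $\Xi=q_1^{1/2}q_2^{1/4}$.

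\textbf{Smoothness refinement and main obstacle.} When $q_2$ is $y$-smooth with $q_2>q^{3/2}/q_1^3$, the divisors of $q_2$ relevant in \eqref{Lchi} admit factorizations meeting the hypothesis of the Corollary to Lemma~\ref{vdCKK}; a careful application of this stronger estimate (in place of Polya--Vinogradov), together with an optimization over the divisor $r$ in \eqref{Lchi}, replaces the per-pair $q_2^{1/2}$ term by $M^{1/2}q_2^{1/6}y^{1/6}$, which propagates through the calculation to yield $\Xi=q^{1/4}q_2^{1/12}y^{1/12}$; the condition $q_2>q^{3/2}/q_1^3$ is precisely the regime where this $q$-van der Corput estimate wins. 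The main technical obstacle is the $\Delta$-averaging above: the trivial bound $\sum1/\Delta\ll X^2$ would yield $(q/q_1)^{1/2}X$ in place of $(q/q_1)^{1/2}X^{1/2}$, missing the second term of the claim; extracting the linear-in-$X$ saving by exploiting the multiplicative structure of $\Delta$ (i.e., completing $\mathfrak{X}$ to all primitive characters mod $q_2$ and factoring prime by prime) is therefore the key point.
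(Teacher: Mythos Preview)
Your proposal is correct and follows essentially the same route as the paper: apply Proposition~\ref{openup+poisson}, use twisted multiplicativity to strip off the fixed $K_{\chi_3}$-factor, apply Cauchy--Schwarz in $m$ after dualizing the $\chi_2$-sum, expand the square, decompose the cross-terms via~\eqref{Lchi}, and estimate the resulting $K^{\circ}$-sums by Lemma~\ref{PVKK} (resp.\ the $q$-van der Corput Corollary, with the threshold $r_2\gtrless q^{3/2}/q_1^3$, in the smooth case). The only cosmetic difference is in the $\Delta$-averaging: the paper swaps the order of summation and counts, for fixed $\chi_2$ and $r_2\mid q_2$, the characters $\chi_2'$ with $\Delta(\chi_2,\chi_2')\mid r_2$ (there are at most $r_2$ of them), obtaining $\sum_{\chi_2,\chi_2'}\sum_{\Delta\mid r_2\mid q_2}r_2^{-1}\le d(q_2)X$, whereas you complete $\mathfrak{X}$ to all primitive characters modulo $q_2$ and factor $\sum_{\chi_2}\Delta(\chi_2,\chi_2')^{-1}$ prime-by-prime; both yield the same $q^{\epsilon}X$ bound.
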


It clearly suffices to prove the above bound for $S^{\pm}_2(\chi_2\chi_3)$. We have, from Proposition \ref{openup+poisson}, the inequality
\begin{equation}\label{S<bla+T}
\sum_{\chi_2\in \mathfrak{X}}S_2^{\pm}(\chi_2\chi_3)\ll q^{\epsilon}\left\{q_1X+q_1^{3/2}q^{-1/2}\mathcal{T}^{\pm}(\mathfrak{X};\chi_3)\right\},
\end{equation}
where
$$
\mathcal{T}^{\pm}(\mathfrak{X};\chi_3):=\sum_{\chi_2\in\mathfrak{X}}\left|\sum_{0<|m|\ll q^{1+\epsilon}/q_1^2}\alpha^{\pm}_mK_{\chi_2\chi_3}(mq_1)\right|.
$$
Our next step is to use Cauchy-Schwarz to separate the oscillations of $\alpha^{\pm}_m$ from those of $K_{\chi_2\chi_3}(mq_1)$, but we can still reduce the complexity of the final sum by using the factorization $Q_1=q_2q_3$ and Lemma \ref{lemKchi}. Thus, we obtain
\begin{multline}\label{T<}
\mathcal{T}^{\pm}(\mathfrak{X},\chi_3)\leq
\left(\sum_{0<|m|\ll q^{1+\epsilon}/q_1^2}|\alpha^{\pm}_m|^2\right)^{1/2}\\
\times\left(\sum_{\chi_2,\chi_2'\in\mathfrak{X}}\left|\sum_{0<|m|\ll q^{1+\epsilon}/q_1^2}K_{\chi_2}(mq_1\overline{q_3})\overline{K_{\chi'_2}(mq_1\overline{q_3})}\right|\right)^{1/2}.
\end{multline}
We use \eqref{Lchi} to write
\begin{multline}\label{openwithK0}
\sum_{0<|m|\ll q^{1+\epsilon}/q_1^2}K_{\chi_2}(mq_1\overline{q_3})\overline{K_{\chi'_2}(mq_1\overline{q_3})}\\
=\sum_{\substack{q=r_2s_2\\ \Delta\mid r_2}}\sum_{0<|m|\leq q^{1+\epsilon}/q_1^2}K^{\circ}_{\chi_{2,[r_2]},\chi'_{2,[r_2]}}(mq_1\overline{q_3s_2}),
\end{multline}
where $\Delta=\Delta(\chi_2,\chi_2')$. Now by Lemma \ref{PVKK}, it follows that the inner sum is
$$
\ll q^{\epsilon}\left(qq_1^{-2}r_2^{-1}+r_2^{1/2}\right).
$$
Furthermore, by changing the order of summation, we have
\begin{align*}
\sum_{\chi_2,\chi_2'\in\mathfrak{X}}\sum_{\substack{q=r_2s_2\\ \Delta(\chi_2,\chi'_2)\mid r_2}}\frac 1{r_2}&\leq \sum_{\chi_2\in\mathfrak{X}}\sum_{r_2\mid q_2}\frac{1}{r_2}\#\left\{\chi_2'\rmod{q_2};\Delta(\chi_2,\chi_2')\mid r_2\right\}\\
&\leq \sum_{\chi_2\in\mathfrak{X}}\sum_{r_2\mid q_2}\frac{1}{r_2}\#\left\{\chi'\rmod{r_2}\right\}\notag\\
&\leq d(q)X\notag.
\end{align*}
Inserting the last estimates in \eqref{T<}, we see that
$$
\mathcal{T}^{\pm}(\mathfrak{X};\chi_3)\ll q^{\epsilon}\left(qq_1^{-2}X^{1/2}+q^{1/2}q_1^{-1}q_2^{1/4}X\right).
$$
Replacing this in \eqref{S<bla+T} gives the proposition with $\Xi(q,q_1,q_2)=q_1^{1/2}q_2^{1/4}$.\\

For the other value of $\Xi(q,q_1,q_2)$, we now suppose that $q_2$ is $y$-smooth and that $q_2> q^{3/2}/q_1^3$. The same argument as before, shows that the contribution of the terms where $r_2\leq q^{3/2}/q_1^3$ to \eqref{T<} is bounded by
$$
q^{\epsilon}\left(qq_1^{-2}X^{1/2}+q^{7/8}q_1^{-7/4}X\right).
$$
And by Lemma \ref{vdCKK}, the remaining terms contribute
$$
\ll q^{\epsilon}Xq^{3/4}q_1^{-3/2}q_2^{1/12}y^{1/12}.
$$
Putting these bounds together, and using again that $q_2>q^{3/2}q_1^{-3}$, we obtain
$$
\mathcal{T}^{\pm}(\mathfrak{X};\chi_3)\ll q^{\epsilon}\left(qq_1^{-2}X^{1/2}+q^{3/4}q_1^{-3/2}q_2^{1/12}y^{1/12}X\right).
$$
Replacing this in \eqref{S<bla+T} gives the proposition with the second choice for $\Xi(q,q_1,q_2)$.
\section{proof of Theorem \ref{large}}\label{proofofthm}
We recall that we shall prove four different bounds for $\mathcal{R}(V;q)$. The bound \eqref{A1} follows from the classical result
$$
\sideset{}{{}^{\ast}}\sum_{\chi\rmod q}|L(1/2,\chi)|^4\ll_{\epsilon} q^{1+\epsilon}.
$$
We now prooceed to prove \eqref{A2}. Notice that we can suppose that $V\geq q^{1/8}$, since otherwise, the bound \eqref{A1} is stronger. We can always suppose that $\delta$ is as smal as we want by simply taking the implied constant large enough so that the results become trivial for $\delta$ after some point. We start by supposing we can factorise $q$ as $q=q_1q_2q_3$, where the $q_i$ are squarefree and relatively prime.

We write $\mathcal{R}:=\mathcal{R}(V;q)$ and for a character $\chi_3$ of modulus $q_3$, we let $\mathcal{R}_3(\chi_3)=\{\chi\pmod{q/q_3};\chi\chi_3\in\mathcal{R}\}$. Finally, we let
$$
\mathfrak{X}(\chi_3):=\{\chi_2\rmod{q_2};\chi_1\chi_2\in \mathcal{R}(\chi_3),\text{ for some }\chi_1\rmod{q_1}\}.
$$
We have the trivial inequality
$$
R(\chi_3):=\#\mathcal{R}(\chi_3)\leq V^{-2}\sum_{\chi_2\in\mathfrak{X}(\chi_3)}S_2(\chi_2\chi_3).
$$
Thus, by Proposition \ref{AfterCauchy} with the first choice for $\Xi(q,q_1,q_2)$, we see that

$$
R(\chi_3)\ll_{\epsilon} q^{\epsilon}V^{-2}\left\{\left(q_1+q_1^{1/2}q_2^{1/4}\right)R(\chi_3)+(q/q_1)^{1/2}R(\chi_3)^{1/2}\right\}.
$$
Since $q$ is $q^{\delta}$-smooth and we are in the case where $V>q^{\delta}$, we can always pick $q_1$ and $q_2$ satsfiyng the inequalities
\begin{equation}\label{q1q2}
\begin{cases}
V^2q^{-2\delta}\leq q_1\leq V^2q^{-\delta},\\
V^4q^{-2\delta}\leq q_2\leq V^4q^{-\delta}.
\end{cases}
\end{equation}
In this case we can ignore the first two terms by taking any $\epsilon<\delta$. It then follows that
$$
R(\chi_3)\ll_{\delta} q^{\delta}V^{-4}qq_1^{-1}.
$$
Summing over $\chi_3$ one gets
$$
\#\mathcal{R}(V,q)\ll_{\delta} q^{\delta}V^{-4}q^2q_1^{-2}q_2^{-1}\leq q^{2+7\delta}V^{-12}.
$$

Now, finally, we prove the bounds \eqref{A3} and \eqref{A4}. We suppose $V>q^{3/20+\delta}$ and suppose again that we have $q=q_1q_2q_3$ with the $q_i$ squarefree and relatively prime. Suppose further that $q_2$ is $q^{\delta}$-smooth and $q_2>q^{3/2}q_1^{-3}$.

As in the previous case, we use Proposition \ref{AfterCauchy} but this time with the second choice of $\Xi(q,q_1,q_2)$. Thus, with the notation as above, we obtain that

\begin{equation}\label{R<}
R(\chi_3)\ll_{\epsilon} q^{\epsilon}V^{-2}\left\{\left(q_1+q^{1/4+\delta/12}q_2^{1/12}\right)R(\chi_3)+(q/q_1)^{1/2}R(\chi_3)^{1/2}\right\}.
\end{equation}
As before, we want to get rid of the first two terms. We use $q^{\delta}$-smoothness to take $q_1$ and $q_2$ satisfying
$$
\begin{cases}
V^2q^{-2\delta}\leq q_1\leq V^2q^{-\delta},\\
Wq^{-\delta}\leq q_2\leq W,
\end{cases}
$$
where $W:=\min(V^{24}q^{-3}q^{-12\delta},q/q_1)$. Notice that the condition $V>q^{3/20+\delta}$ and $\delta$ sufficientely small ensures that
$$
Wq^{-\delta}>q^{3/2}q_1^{-3}.
$$
Hence \eqref{R<} holds for this choice of $q_1$ and $q_2$ and moreover, we can ignore the first two terms. We deduce by taking $\epsilon <\delta$
$$
R(\chi_3)\ll q^{\delta}V^{-4}qq_1^{-1}.
$$
Summing over $\chi_3$ one gets
$$
\#\mathcal{R}(V,q)\ll_{\delta} q^{\delta}V^{-4}q^2q_1^{-2}q_2^{-1}\ll_{\delta} q^{O(\delta)}\max(qV^{-6},q^{5}V^{-32}),
$$
which concludes the proof of \eqref{A3} and \eqref{A4}, when $V>q^{3/20+\delta}$.\\

Finally, if $q^{3/20}< V\leq q^{3/20+\delta}$, we can use \eqref{A2} and deduce that for some $C>0$,
$$
\#\mathcal{R}(V,q)\ll_{\delta} q^{2+C\delta}V^{-12}\leq q^{5+(C+20)\delta}V^{-32}.
$$
The proof of Theorem \ref{large} is now finished.

\section{Bounds on complete exponential sums}\label{tracefunctions}

In this section we prove Proposition \ref{completesums} on estimates for complete exponential sums related to $K_{\chi}$ (recall definition \eqref{Kchi}).

The upper bound \eqref{PV} follows by developing the left-hand side according to definitions \eqref{Kchi} and \eqref{Kcirc} and using the Weil bound to the sum

$$
\sum_{\substack{x\rmod p\\ x\neq 0,t}}\chi(1+\overline{x})\overline{\chi'}(1+\overline{x-t}),
$$

when $t\neq0$ and an explicit calculation if $t=0$.

As for \eqref{vdC}, this is much more involved and to prove it we need to invoke the formalism of $\ell$-adic sheaves and its trace functions of Deligne. The proof is inspired by the work of Fouvry-Kowalski-Michel and draws upon material developed by Katz in his books \citep{Katz1988gauss} and \citep{Katz1990exponential}. We refer the reader to \citep[section 6]{Polymath} for a nice introduction to the concepts used in this section.

We start with the case $\chi\neq\chi'$. We want to estimate

\begin{equation}\label{correlation}
\sum_{u\rmod p}K_{\chi}(s+u)\,\overline{K_{\chi'}(s+u)}\,\overline{K_{\chi}(u)}\,K_{\chi'}(u)e\left(-\frac{tu}{p}\right),
\end{equation}
where $K_{\chi}$ is given by \eqref{Kchi}. The case $t=s=0$ follows from the Weil bound (Lemma \ref{Kchi}). For the remaining cases, we shall use the techniques from \citep{FKM2015study} in the same spirit of \citep[Appendix]{Irving2016estimates}.

We want to apply \citep[Theorem 2.7]{FKM2015study} and \citep[Proposition 1.1]{FKM2015study} to the following data:
\begin{itemize}
\item The sheaf $\mathcal{G}=\mathcal{L}_{\psi(tX)}$, with $\psi$ the additive character corresponding to $x\mapsto e(x/p)$,

\item The sheaves $\mathcal{F}_1=\mathcal{F}_{\chi},\;\mathcal{F}_2=\mathcal{F}_{\chi'},\,\mathcal{F}_3=[+s]^{\ast}\mathcal{F}_{\chi},\,\mathcal{F}_4[+s]^{\ast}\mathcal{F}_{\chi'},$ where for a primitive dirichlet character $\chi$, $\mathcal{F}_{\chi}=FT_{\psi}(\mathcal{H}_{\chi})\otimes\mathcal{L}_{\psi}(X/2)$ with $\mathcal{H}_{\chi}$ the Kummer sheaf $\mathcal{L}_{\chi(1+X^{-1})}$

\item The open set $U=\Aa^1-\{0,-1,-s,-s-1\}$.
\end{itemize}

We first remark that the trace function associated to $\mathcal{F}_{\chi}$ is not exactly $K_{\chi}(u)$ but $K_{\chi}(u)e(\overline{2}u/p)$ instead. However this has no influence in the study of the correlation sum \eqref{correlation}. The reason why we made the twist by an Artin-Schreier sheaf is that in this way we obtain a self-dual sheaf (see \citep[Appendix]{Irving2016estimates}).

By the same argument as in \citep[Appendix, Proposition 3]{Irving2016estimates}, if the hypothesis of \citep[Theorem 2.7]{FKM2015study} are satisfied, then we have that the sum in \eqref{correlation} is $\ll p^{1/2}$ unless $s=t=0$. We shall now verify that that the above data satisfies these conditions. This is almost entirely done in the proof of \citep[Appendix, Proposition 3]{Irving2016estimates}. For instance, in order to prove that the $\mathcal{F}_i$ form an $U$-generous tuple, we are only left with proving that for $i\neq j$, one cannot have
$$
\mathcal{F}_i\simeq \mathcal{F}_j\otimes \mathcal{L},
$$
for some lisse sheaf of rank one in $U$. Moreover, by looking at the tame ramification of $\mathcal{F}_{\chi}$ at $0$, as in \citep[Appendix]{Irving2016estimates} we can reduce it to considering the cases $\{i,j\} =\{1,2\}$ or $\{3,4\}$. Finally, by making an additive shift, we can focus only on the first of these two. In other word we have to rule out the possibility of a geometric isomorphism
\begin{equation}\label{1dimtwist}
\mathcal{F}_{\chi}\simeq \mathcal{F}_{\chi'}\otimes\mathcal{L}.
\end{equation}
Such an $\mathcal{L}$, if it exists, must be unramified at $\Gg_m$ and, since $\mathcal{F}_{\chi}$ and $\mathcal{F}_{\chi'}$ at $0$ are formed by a single unipotent block (this comes from \citep[Theorem 7.5.4 (7)]{Katz1990exponential} and the fact that the Kummer sheaf $\mathcal{H}_{\chi}$ is unramified at $+\infty$), we see that $\mathcal{L}$ must be geometrically trivial at $0$. The situation at $\infty$ is a bit more delicate. By using Laumon's local Fourier Transform (see \citep[Theorem 7.4.4 (2)]{Katz1990exponential} and \citep[Corollary 7.4.1 (3)]{Katz1990exponential}), we know that as an $I(\infty)$ representation, we have
\begin{equation}\label{F(oo)}
\mathcal{F}_{\chi}(\infty)\approx \left(\mathcal{L}_{\overline{\chi}}\otimes\mathcal{L}_{\psi(X/2)}\right)\oplus\left(\mathcal{L}_{\chi}\otimes\mathcal{L}_{\psi(-X/2)}\right)
\end{equation} 
and a similar decomposition for $\chi'$. Since both parts are $1$-dimensional, they must be irreducible. Thus, taking the tensor product of $\mathcal{F}_{\chi'}(\infty)$ with $\mathcal{L}(\infty)$, it must send each of the two pieces of $\mathcal{F}_{\chi'}(\infty)$ in one of the pieces in \eqref{F(oo)}. By analysing the two possibilities, we find out that $\chi\overline{\chi'}$ must be quadratic and $\mathcal{L}(\infty)=\mathcal{L}_{\chi'\overline{\chi}}$. In particular, $\mathcal{L}$ is tame at $\infty$.

Summarizing, we have seen that such $\mathcal{L}$, if it exists, must be everywhere unramified, except maybe at $\infty$ where it is at most tame. We claim that this forces $\mathcal{L}$ to be geometrically trivial. Indeed, suppose that $\mathcal{L}$ is has a tame ramification at $\infty$. Then, by an application of the Grothendieck-Ogg-Shafarevich formula (see \citep[2.3.1]{Katz1988gauss}),
one obtains that
$$
\operatorname{dim}H^1_c(\Aa\otimes\overline{\Ff_p},\mathcal{L})=-\chi_c(\Aa^1\otimes\overline{\Ff_p})=-1,
$$
which is impossible. Finally, since $\mathcal{F}_{\chi}$ and $\mathcal{F}_{\chi'}$ are not geometrically isomorphic, the geometric isomorphism \eqref{1dimtwist} is impossible. 

We shall now consider the diagonal case (\textit{i.e.} $\chi=\chi'$). The case $t\neq 0$ also follows from the previous argument, so we can focus on the case $t=0$. We must then estimate the sum
\begin{multline}\label{openKo}
\sum_{u\rmod p}(|K_{\chi}(s+u)|^2-1)\,(|K_{\chi}(u)|^2-1)=\\
\sum_{u\rmod p}|K_{\chi}(s+u)|^2\,|K_{\chi}(u)|^2-p+4,
\end{multline}
where, here, we used the elementary calculation $\sum_{u(p)}|K_{\chi}(u)|^2=p-2$.

We apply \citep[Corollary 1.7]{FKM2015study} to the sheaf $\mathcal{F}_{\chi}$. As a part of the proof of \citep[Proposition A.3]{Irving2016estimates}, it was shown that the sheaf is bountiful of $\mathrm{SL}_2$-type (see \citep[Definition 1.2]{FKM2015study}). So we only need to ensure that the arithmetic monodromy group is also $\mathrm{SL}_2$. But this follows because $\mathcal{F}_{\chi}$ is (aritmetically) self-dual. Hence, the result applies and gives

$$
\sum_{u\rmod p}|K_{\chi}(s+u)|^2\,|K_{\chi}(u)|^2=Ap+O(p^{1/2}),
$$
where
$$
A=
\begin{cases}
A(2)^2,\text{ if }s\neq 0\\
A(4),\text{ if }s=0,
\end{cases}
$$
and $A(i)$ denotes the multiplicity of the trivial representation of $\mathrm{SL}_2$ in the $i$-th tensor product of the standard representation of $\mathrm{SL}_2$. By character theory of its compact real form $SU(2)$,
$$
A(i)=\frac{2}{\pi}\int_{0}^{\pi}(2\cos(\theta))^i\sin(\theta)d\theta.
$$
In particular, we have $A(2)=1$ and $A(4)=2$. Replacing this result in \eqref{openKo} concludes the proof of Proposition \ref{completesums}.

\bibliographystyle{plain}

\end{document}